\documentclass[12pt,reqno]{amsart}
\usepackage[activeacute,english]{babel}

\usepackage[utf8]{inputenc}
\usepackage{amsmath} %General package for maths
\usepackage{amsthm} %Package for theorems
\usepackage{amssymb} %Symbols (loads also amsfonts)
\usepackage{amscd} %Package for rectangular diagrams

\usepackage{emptypage}
\usepackage{enumitem} %Enumerations
\usepackage{mathtools} %More symbols, etc.
\usepackage{mathrsfs} %Calligraphic symbols with \mathscr
\usepackage{upgreek}
\usepackage{hyperref}
\usepackage{cleveref}
\usepackage{autonum}

\usepackage{esint} %To write averaged integrals
\usepackage{graphicx,caption} %For using \includegraphics
\usepackage{float} %Interface for defining floating objects

\usepackage{pgf,tikz,pgfplots}
\usetikzlibrary{arrows}
\usetikzlibrary{intersections}
\usetikzlibrary{fadings}
\usepackage{wrapfig}
\usepackage{xcolor}
\definecolor{CBgreen}{HTML}{009E73}
\definecolor{CBorange}{HTML}{E69F00}

\usepackage[margin=0.9in]{geometry}
\newtheorem{theorem}{Theorem}[section]
\newtheorem{proposition}[theorem]{Proposition}
\newtheorem{lemma}[theorem]{Lemma}
\newtheorem{corollary}[theorem]{Corollary}

\theoremstyle{definition}

\theoremstyle{remark}
\newtheorem{remark}[theorem]{Remark}
\newtheorem*{remark*}{Remark}

\newcommand{\con}[1]{\mathbb{#1}}
\newcommand{\C}{\con{C}} %Complex
\newcommand{\R}{\con{R}} %Real
\newcommand{\D}{\mathcal{D}}
\newcommand{\RR}{\mathcal{R}}

\newcommand{\Dom}{\mathrm{Dom}}

\newcommand{\disk}{\con{D}} % Disk

\renewcommand{\leq}{\leqslant}

\renewcommand{\geq}{\geqslant}

\numberwithin{equation}{section}
\title[A Payne-Weinberger inequality for quantum dot Dirac operators]{A Payne-Weinberger inequality\\for quantum dot Dirac operators}

\author[J. Duran]{Joaquim Duran}
\address{J. Duran
\newline
Centre de Recerca Matem\`atica, Edifici C, Campus Bellaterra, 08193 Bellaterra, Spain}
\email{jduran@crm.cat}

\author[A. Mas]{Albert Mas}
\address{A. Mas \textsuperscript{1,2}
\newline
\textsuperscript{1}
Departament de Matem\`atiques,
Universitat Polit\`ecnica de Catalunya,
Campus Diagonal Bes\`os, Edifici A (EEBE), Av. Eduard Maristany 16, 08019
Barcelona, Spain
\newline
\textsuperscript{2}
Centre de Recerca Matem\`atica, Edifici C, Campus Bellaterra, 08193 Bellaterra, Spain}
\email{albert.mas.blesa@upc.edu}

\author[T. Sanz-Perela]{Tom\'as Sanz-Perela}
\address{T. Sanz-Perela \textsuperscript{1,2}
\newline
\textsuperscript{1}
Departament de Matem\`atiques i Inform\`atica,
Universitat de Barcelona,
Gran Via de les Corts Catalanes 585, 08007, Barcelona, Spain
\newline
\textsuperscript{2}
Centre de Recerca Matem\`atica, Edifici C, Campus Bellaterra, 08193 Bellaterra, Spain}
\email{tomas.sanz.perela@ub.edu}

\date{\today}
\subjclass[2010]{35P05, 35P15, 35Q40.}
\keywords{Dirac operator, eigenvalue estimates, Payne-Weinberger inequality.}

\thanks{The three authors are supported by the Spanish grants PID2025-168310NB-I00 and RED2024-153842-T funded by MICIU/AEI/10.13039/501100011033 and by FSE+. This work is supported by the Spanish State Research Agency, through the Severo Ochoa and Mar\'ia de Maeztu Program for Centers and Units of Excellence in R\&D (CEX2020-001084-M). The first author is also supported by CEX2020-001084-M-20-1 and acknowledges CERCA Programme/Generalitat de Catalunya for institutional support. The third author is also supported by the Spanish grant PID2024-156429NB-I00. \vspace{3pt}}

\begin{document}

\begin{abstract}
    In this work we prove a Payne-Weinberger type inequality for quantum dot Dirac operators defined on bounded and simply connected planar domains. 
    This is a sharp upper bound for their first positive eigenvalue  depending only on the isoperimetric deficit of the domain. To this end, we use a recently studied connection with the so-called $\overline\partial$-Robin Laplacian and we establish an analogous inequality for its first eigenvalue, relying on the corresponding inequality for the Robin Laplacian.
\end{abstract}

\maketitle 

\setcounter{tocdepth}{2}
\makeatletter
\def\l@subsection{\@tocline{2}{0pt}{2.5pc}{5pc}{}}
\makeatother
\tableofcontents

\vspace{-11pt}

\section{Introduction}

The Payne-Weinberger inequality asserts\footnote{
	This was obtained in \cite[Equation~(2.18)]{Payne1961}. 
	Unfortunately, there is a typo in the cited formula: a factor $\pi^2$ should be $\pi$; see  \cite{Antunes2006,Ftouhi2021}.}  
that given $\Omega\subset \R^2$ a bounded 
simply connected (and sufficiently regular) domain, the first eigenvalue $\Lambda_\Omega$ of the Dirichlet Laplacian in $\Omega$ satisfies
\begin{equation} \label{Eq:PW_Dirichlet}
	|\Omega|\Lambda_\Omega - \pi \Lambda_\disk \leq C_{\mathrm{PW}} \left(\dfrac{|\partial\Omega|^2}{4\pi|\Omega|} -1 \right), \quad \text{where} \quad C_{\mathrm{PW}} := \pi j_{0,1}^2\left( \dfrac{1}{J_1^2(j_{0,1})}-1 \right).
\end{equation}
Here, $\disk$ denotes the disk of unit radius centered at the origin, $J_0$ and $J_1$ denote the Bessel functions of the first kind of order $0$ and $1$, respectively, and $j_{0,1}$ is the first positive zero of~$J_0$; recall that $\Lambda_\disk = j_{0,1}^2$. Moreover, equality holds in \eqref{Eq:PW_Dirichlet} if and only if $\Omega$ is a disk.

This inequality can be nicely visualized in terms of the so-called \textit{Blaschke-Santal\'o diagrams}: 
in this context, given $\mathcal{C}$ a class of open sets of $\R^2$, such diagrams are a representation of the planar region $\mathcal{D}_\mathcal{C}$ consisting of all the points $(x,y) \in \R^2$ such that there exists $\Omega\in \mathcal{C}$ for which 
\begin{equation}
	\frac{|\partial \Omega|}{\sqrt{4\pi|\Omega|}} = x
	\quad\text{and}\quad
	|\Omega|\Lambda_\Omega = y,
\end{equation}
where $\Lambda_\Omega$ is defined for a general open set $\Omega\subset\R^2$ using the usual variational formulation.
The region $\mathcal{D}_\mathcal{C}$ allows visualizing which are the possible (scale-invariant) inequalities involving the first eigenvalue of the Dirichlet Laplacian, the perimeter, and the area, for different classes of sets.
In particular, when $\mathcal{C}$ is the class of bounded (and sufficiently regular) open sets, the isoperimetric inequality states that $\mathcal{D}_\mathcal{C}$ lies at the right of the line $ x = 1$, while the Faber-Krahn inequality asserts that $\mathcal{D}_\mathcal{C}$ lies  above the line $y = \pi \Lambda_\disk$.
For the smaller class $\mathcal{C}$ of bounded and
simply connected smooth domains, the Payne-Weinberger inequality \eqref{Eq:PW_Dirichlet} states that $\mathcal{D}_\mathcal{C}$ lies below the parabola 
\begin{equation}
    y = \pi \Lambda_\disk +  C_{\mathrm{PW}} (x^2- 1).
\end{equation}
While the equality in \eqref{Eq:PW_Dirichlet} is attained when $\Omega$ is a disk ---the point $(1, \pi \Lambda_\disk)$ belongs to both the parabola and $\mathcal{D}_\mathcal{C}$---, it is still an open problem (see~\cite[Conjecture 6.1]{Antunes2006}) to improve the right-hand side of \eqref{Eq:PW_Dirichlet} ---that is, to find the optimal curve bounding from above $\mathcal{D}_\mathcal{C}$.
Let us also mention that other classes $\mathcal{C}$ of open sets (such as convex domains, star-shaped domains, polygons, etc.) have been considered in the literature. We refer the reader to the works~\cite{Antunes2006} and~\cite[Sections~3.1 and~4.3]{Ftouhi2021} which contain, among others, nice surveys on known inequalities, numerical simulations, and conjectures related to Blaschke-Santal\'o diagrams for the first eigenvalue of the Dirichlet Laplacian.

\subsection{The setting and main result}

Our goal in the present work is to prove a Payne-Weinberger type inequality (in the sequel, PW-type inequality) for \textit{quantum dot Dirac operators}.
Given $m\geq 0$ (a parameter that typically denotes the mass) and $\theta\in(-\frac \pi 2,\frac {\pi}{2})$, the quantum dot Dirac operator $\D_{\theta,m}$ is the self-adjoint operator in $L^2(\Omega; \C^2)$ defined by
\begin{equation} \label{Eq:Dirac_op_theta}
	\begin{split}
		\mathrm{Dom}(\D_{\theta,m}) &:= \big\{ 
		(\begin{smallmatrix}u\\v\end{smallmatrix}) \in H^1(\Omega;\C^2): \, \cos \theta \, v = i (1-\sin \theta ) (\nu_1+i\nu_2) u  \,\text{ in } H^{1/2}(\partial \Omega;\C) \big\},\vspace{2cm} \\\vspace{2cm}
		\D_{\theta,m}\begin{pmatrix}
		u\\v
	\end{pmatrix} &:= \begin{pmatrix}
			m&-i(\partial_1 - i \partial_2)\\-i(\partial_1 + i \partial_2)&-m
		\end{pmatrix}\begin{pmatrix}
		u\\v
	\end{pmatrix}
	 \quad\text{for all } (\begin{smallmatrix}u\\v\end{smallmatrix})\in\mathrm{Dom}(\D_{\theta,m}).
	\end{split}
\end{equation}
Here and throughout the paper, $\Omega\subset \R^2$ is a bounded domain with $C^2$~boundary, $\nu:=(\nu_1,\nu_2)$ is the unit outward normal vector field on $\partial\Omega$, and $\nabla:=(\partial_1,\partial_2)$ denotes the gradient in $\R^2$. In the mathematical physics literature, quantum dot Dirac operators are used to model electrons conducting electricity in graphene quantum dots and nano-ribbons; we refer to~\cite{AkhmerovBeenakker,Benguria2017Self,Benguria2017Spectral,DuranMasSanzPerela2026,WurmRycerzetAl} and the references therein for more details about this, as well as their basic mathematical properties.

The spectrum of 
$\D_{\theta,m}$, denoted by $\sigma(\D_{\theta,m})$, consists of real eigenvalues of finite multiplicity accumulating only at $\pm \infty$. Moreover, by charge conjugation (see \cite[Appendix A.2]{DuranMasSanzPerela2026}), we know that 
$\lambda\in\sigma(\D_{\theta,m})$ if and only if
$-\lambda\in\sigma(\D_{-\theta,m})$. 
Therefore, to analyze $\sigma(\D_{\theta,m})$ as 
$\theta$ ranges over~$(-\frac \pi 2,\frac {\pi}{2})$, it suffices to investigate $\sigma(\D_{\theta,m})\cap[0,+\infty)$. In particular, this motivates the study of the first (smallest) nonnegative eigenvalue 
\begin{equation}
    \lambda_\Omega(\theta,m) := \min\big(\sigma(\D_{\theta,m})\cap[0,+\infty)\big),
\end{equation}
which actually satisfies $\lambda_\Omega(\theta,m) > m \geq 0$; see \cite[Lemma~3.1]{DuranMasSanzPerela2026}. 
A natural problem is, as for the Dirichlet Laplacian, to find inequalities relating this eigenvalue with geometric quantities associated to $\Omega$, such as its area and perimeter.

A first numerical approach to the previous problem was carried out by Antunes, Benguria, Lotoreichik, and Ourmi\`eres-Bonafos in \cite[Section~8]{Antunes2021} in the case $\theta=m=0$.\footnote{In the literature, the choice $m=0$ is usually referred to as {\em massless Dirac operator} and $\theta=0$ as \textit{infinite mass boundary conditions}.} More precisely,
in~\cite[Figure~1]{Antunes2021} the first eigenvalue $\lambda_\Omega(0,0)$ was plotted, as a function of~$|\partial\Omega|$, for 2500 randomly generated simply connected domains with $|\Omega|=\pi$. 
For the benefit of the reader, in~\Cref{Fig:Plot_Antunes} we provide a similar plot in the scale-invariant\footnote{Note that $	\lambda_{t\Omega} (\theta,m) = t^{-1} \lambda_\Omega(\theta,tm)$ for all $t>0$, which easily follows by inspecting  \eqref{Eq:Dirac_op_theta}.} variables 
\begin{equation}
	\label{Eq:ScaleInvariantVariablesDirac}
	x= \frac{|\partial\Omega|}{\sqrt{4\pi |\Omega|}}
	\quad\text{and}\quad 
	y = \sqrt{|\Omega|}\lambda_\Omega(0,0),
\end{equation}
using the data of 2437 randomly generated simply connected domains with $|\Omega|=\pi$ ---that is, we plot the points $(|\partial\Omega|/(2\pi),\sqrt{\pi} \lambda_\Omega(0,0))\in\R^2$. 
This data was obtained by Pedro R.\! S.\! Antunes by running the same code used to obtain \cite[Figure~1]{Antunes2021}; we are very grateful to Benguria, Lotoreichik, Ourmi\`eres-Bonafos, and, specially, to Antunes for sharing the data with us. 

\begin{figure}[h!]
	\centering
	\includegraphics[width=0.55\linewidth]{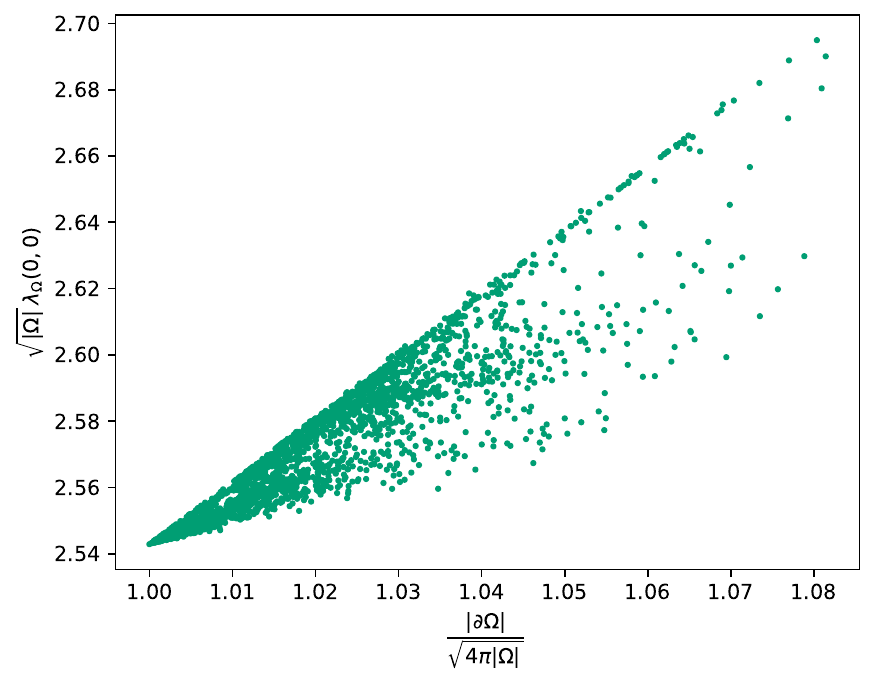}
	\caption{In the scale-invariant variables \eqref{Eq:ScaleInvariantVariablesDirac}, plot of the eigenvalue $\lambda_\Omega(0,0)$, as a function of the perimeter $|\partial\Omega|$, for 2437 randomly generated simply connected domains with $|\Omega|=\pi$. }
	\label{Fig:Plot_Antunes}
\end{figure}

The plot in \Cref{Fig:Plot_Antunes} is a numerical simulation of a Blaschke-Santal\'o diagram for the quantum dot Dirac operator $\D_{0,0}$ that suggests two facts.
On the one hand, the validity of a Faber-Krahn type inequality for $\lambda_\Omega(0,0)$ ---since the point located in the most extreme lower left position corresponds to the disk.
On the other hand, the existence of an upper bound for $\lambda_\Omega(0,0)$ in terms of a \emph{linear} dependence on the perimeter, which could be seen as an analogue of the Payne-Weinberger inequality~\eqref{Eq:PW_Dirichlet} in the Dirac setting.

Regarding the Faber-Krahn type inequality, we will only mention that it is considered a challenging open problem \cite[Problem~5.1]{ProblemListShapeOptimization} for which several (positive) partial results, as well as numerical schemes supporting its validity, have been established; see \cite{Antunes2021,Antunes2024,Benguria2017Spectral,Behrndt2024,Behrndt2026,BrietK2022,Duran2027,DuranMasSanzPerela2026}.
In the present paper, however, we will focus on the linear upper bound suggested by \Cref{Fig:Plot_Antunes}.
Written in the scale-invariant variables  \eqref{Eq:ScaleInvariantVariablesDirac}, we expect to find an upper bound of the form 
\begin{equation}
	y \leq \sqrt{\pi}\lambda_\disk(0,0) +  C (x-1).
\end{equation}
This is actually our main result, which applies not only for the case $\theta=m=0$ but for the whole family of quantum dot Dirac operators $\D_{\theta,m}$ with $m\geq 0$ and $\theta\in (-\frac{\pi}{2},\frac{\pi}{2})$.

\begin{theorem}\label{Thm:DPW_QD}
	Given $m\geq 0$ and  $\theta\in (-\frac{\pi}{2},\frac{\pi}{2})$, there exists a constant $C>0$ depending only on $m$ and $\theta$ such that
	\begin{equation}\label{Ineq_:main_thm}
		\sqrt{|\Omega|}\, \lambda_\Omega  \bigg (\theta, \frac{m}{\sqrt{|\Omega|}} \bigg)
		 - \sqrt{\pi}\, \lambda_\disk \bigg (\theta, \dfrac{m}{\sqrt{\pi} }\bigg)
		 \leq C \left( \dfrac{|\partial\Omega|}{\sqrt{4\pi |\Omega|}}-1 \right)
	\end{equation}
for every bounded simply connected domain~$\Omega\subset \R^2$ with $C^2$~boundary.
\end{theorem}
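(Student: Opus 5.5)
We follow the strategy announced in the abstract: transfer the problem to the $\overline\partial$-Robin Laplacian and use a Payne-Weinberger type inequality for the Robin Laplacian. By the scaling $\lambda_{t\Omega}(\theta,m)=t^{-1}\lambda_\Omega(\theta,tm)$ we may normalize $|\Omega|=\pi$. The claim then reads $\lambda_\Omega(\theta,m/\sqrt\pi)-\lambda_\disk(\theta,m/\sqrt\pi)\leq C'\big(|\partial\Omega|/(2\pi)-1\big)$.

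\textbf{Step 1: reformulate $\lambda_\Omega$ through a $\overline\partial$-Robin problem.} We use the characterization from \cite{DuranMasSanzPerela2026}. For $\lambda>m$, $\lambda$ is the first nonnegative eigenvalue of $\D_{\theta,m}$ exactly when $\lambda^2-m^2$ equals the first eigenvalue $\mu_\Omega(a)$ of the $\overline\partial$-Robin Laplacian
\[
\int_\Omega 4|\partial_{\bar z}u|^2+a\int_{\partial\Omega}|u|^2,
\]
with a boundary parameter $a=a(\lambda,\theta,m)$. Eliminating $v$ via the boundary condition, $a$ depends continuously and monotonically on $\lambda$ (roughly $a=\frac{\cos\theta}{1-\sin\theta}(\lambda+m)$ or its analogue). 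Hence $\lambda_\Omega$ is the unique solution of a fixed-point equation $\lambda^2-m^2=\mu_\Omega(a(\lambda))$. Since $\mu_\Omega$ is increasing and concave in $a$ while the left side grows quadratically, an upper bound $\mu_\Omega(a)\leq F(a,|\partial\Omega|)$ that holds for every $a$ yields $\lambda_\Omega\leq\Lambda$, where $\Lambda$ solves $\Lambda^2-m^2=F(a(\Lambda),|\partial\Omega|)$.

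\textbf{Step 2: a PW inequality for $\mu_\Omega(a)$.} For real $u\in H^1$ we have
\[
4\int|\partial_{\bar z}u|^2=\int|\nabla u|^2,
\]
so $\mu_\Omega(a)\leq\mu^R_\Omega(a)$, the first Robin eigenvalue, taken with the real Robin eigenfunction as test function. For the disk the $\overline\partial$-Robin minimizer is not real; it is of the form $e^{\alpha z}$-type or built from Bessel functions. So the comparison should instead be made by transplantation, relying as the abstract says on the known PW-type inequality for the Robin Laplacian (Laugesen-type / Payne-Weinberger method). Concretely:
\begin{itemize}[label={}]
\item take the $\overline\partial$-Robin eigenfunction of the disk, $u_\disk$, which is radial-type in modulus;
\item transplant it to $\Omega$ via the Payne-Weinberger level-set coordinates $\rho(x)=\mathrm{dist}$-like parallel coordinates (or via the Riemann map $f:\disk\to\Omega$, using simple connectivity);
\item estimate the Rayleigh quotient using $|\partial\Omega|$ and $|\Omega|=\pi$.
\end{itemize}
With the conformal map, $\int_\Omega 4|\partial_{\bar z}(u\circ f^{-1})|^2=\int_\disk 4|\partial_{\bar z}u|^2$ is conformally invariant when $u$ is holomorphic-antiholomorphic adapted. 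The boundary term becomes $\int_{\partial\disk}|u|^2|f'|$ and the $L^2$ norm $\int_\disk|u|^2|f'|^2$. Using $\int_{\partial\disk}|f'|=|\partial\Omega|$ and $\int_\disk|f'|^2=\pi$, together with the monotone radial structure of $|u_\disk|$ (subharmonicity of $|f'|^2$ gives $\int_{|z|<r}|f'|^2\geq\pi r^2$), we get
\[
\mu_\Omega(a)\leq\mu_\disk(a)+K(a)\big(|\partial\Omega|/(2\pi)-1\big),
\]
where $K$ is locally bounded in $a$.

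\textbf{Step 3: conclude.} Insert this bound into the fixed-point equation of Step 1 and compare with the disk, where $\lambda_\disk^2-m^2=\mu_\disk(a(\lambda_\disk))$. By the mean value theorem applied to $\lambda\mapsto\lambda^2-m^2-\mu_\disk(a(\lambda))$, which has positive derivative bounded below near $\lambda_\disk$, we obtain the linear bound for small deficit. For large deficit we use a crude bound such as $\lambda_\Omega\lesssim|\partial\Omega|$, obtained by testing with constants. Together these give a constant $C$ depending only on $(\theta,m)$.

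\textbf{Main obstacle.} The hardest step is Step 2: controlling the boundary term linearly in the deficit. The difficulty is that $|u_\disk|^2$ on $\partial\disk$ is constant while the interior mass is distributed radially, so one needs monotonicity of $|u_\disk|$ in $r$ to compare $\int|u|^2|f'|^2$ with the disk value. I would first try to establish this monotonicity from the explicit Bessel-function form of $u_\disk$.
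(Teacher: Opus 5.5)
\textbf{There is a genuine gap in Step 2, and it comes from a false premise.} You claim the first $\overline\partial$-Robin eigenfunction of the disk is not real, and so you abandon the comparison with the Robin Laplacian. In fact, for a real radial $u$ one has $2\bar\nu\,\partial_{\bar z}u=u'(1)$ on $\partial\disk$. The $\overline\partial$-Robin condition is then just the Robin condition, and the explicit disk computations (Corollary~A.3 of \cite{Duran2026}) show that the first eigenfunction is $J_0(\sqrt{\mu}\,r)$, i.e.\ $\mu_\disk(a)=\mu_\disk^{\mathrm{Rob}}(a)$.

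This is the missing idea. For real test functions $4|\partial_{\bar z}u|^2=|\nabla u|^2$, so $\mu_\Omega(a)\leq\mu_\Omega^{\mathrm{Rob}}(a)$. Combined with equality on the disk, any PW-type bound for the Robin Laplacian transfers verbatim to $\mu_\Omega(a)$; this is \Cref{Cor:PW_dabrRobin}. The paper proves that Robin bound (\Cref{Thm:PW_Robin}) with the interior-parallel coordinates you mention in passing, not with conformal maps:
\begin{itemize}
\item Sz.-Nagy's inequality $L(t)+2\pi t\leq|\partial\Omega|$ is the only place simple connectivity enters. It gives $|r'(t)|\leq 1$ for the equal-area annulus radius, which reduces the Rayleigh quotient to the annulus $r_1<r<r_2$ (\Cref{Prop:PW_estimate}).
\item Testing with $J_0\big(\sqrt{\mu(r^2-r_1^2)}\big)$ and using $aJ_0(\sqrt\mu)=\sqrt\mu J_1(\sqrt\mu)$ yields, for $|\Omega|=\pi$, the bound $\mu_\Omega(a)-\mu_\disk(a)\leq\frac{C_{\mathrm{PW}}}{\pi}\delta_\Omega(\delta_\Omega+2)+j_{0,1}\delta_\Omega$, uniformly in $a$.
\end{itemize}

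\textbf{The conformal transplantation fails at exactly the step you flag.}
\begin{itemize}
\item With $|\Omega|=\pi$ and $f(\zeta)=\sum a_n\zeta^n$, one has $\int_{|\zeta|<r}|f'|^2=\pi\sum n|a_n|^2r^{2n}\leq\pi r^2$. This is the reverse of your claim: the circular means of the subharmonic $|f'|^2$ increase in $r$ and have total mass $\pi$.
\item Since $|u_\disk|=J_0(\sqrt\mu\,r)$ is radially decreasing, Chebyshev's inequality gives $\int_\disk|u_\disk|^2|f'|^2\leq\int_\disk|u_\disk|^2$. The denominator of the transplanted quotient is therefore smaller than in the disk, which pushes the quotient up rather than controlling it.
\item The natural lower bound from subharmonicity is $|f'(0)|^2\int_\disk|u_\disk|^2$, so you would need $1-|f'(0)|^2\lesssim\delta_\Omega$. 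This is false for a badly centred map, already for $\Omega=\disk$ with $f(0)\neq 0$. Even with the optimal centre it would be a nontrivial quantitative stability statement that you do not provide.
\end{itemize}
So the monotonicity of $|u_\disk|$, which you single out as the key, is precisely what makes the comparison go the wrong way.

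\textbf{Steps 1 and 3 are essentially sound}, with two corrections:
\begin{itemize}
\item The parameter is $a=\frac{1-\sin\theta}{\cos\theta}(\lambda+m)$.
\item Positivity of the derivative of $\lambda\mapsto\lambda^2-m^2-\mu_\disk(a(\lambda))$ for $\lambda\geq\lambda_\disk$ needs concavity of $\mu_\disk$ together with $a\mu_\disk'(a)\leq\mu_\disk(a)$ (\Cref{Lemma:PropertiesMuDisc}).
\end{itemize}
With that, your split into small and large deficit, using the constant test function for large deficit, is a legitimate substitute for the paper's tangent-line argument once Step 2 is repaired as above.
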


Next we make some remarks on the constant $C$ appearing in the previous statement, the relation of our result with the plot in \Cref{Fig:Plot_Antunes}, and also the connection with the classical Payne-Weinberger inequality~\eqref{Eq:PW_Dirichlet}. 
For a simpler formula representation, here and in the sequel we will denote the isoperimetric deficit of $\Omega\subset\R^2$ by
\begin{equation}
	\label{Eq:IsopDeficit}
	\delta_\Omega := \dfrac{|\partial\Omega|}{\sqrt{4\pi|\Omega|}}-1.
\end{equation}
Note that, when referring to Blaschke-Santal\'o diagrams in the scale invariant variables \eqref{Eq:ScaleInvariantVariablesDirac}, we have $\delta_\Omega = x - 1$.

\begin{remark}\label{Rmk:Constant}
	We can provide an explicit constant $C$ for which  \eqref{Ineq_:main_thm} holds.
	Indeed, as can be seen in our proof of \Cref{Thm:DPW_QD} ---see in particular \eqref{Eq:ConditionCStar1} and \eqref{Eq:ConditionCStar2} below, taking into account~\eqref{Eq:ConclusionScalingPWDirac}--- the smallest constant $C$ for which we can establish the result is given by 
    \begin{equation} \label{eq:Cthetam}
		C_{\theta,m}:= \max \left\{\sqrt{C_\mathrm{PW}}, \, \sqrt{\pi} K_{\theta,\frac{m}{\sqrt{\pi}}}   \right\}, \, \text{ where } \, 
		{\textstyle K_{\theta,m}  :=  \dfrac{\frac{ C_{\mathrm{PW}}}{\pi}  + \frac{j_{0,1}}{2} }{
				\lambda_\disk(\theta,m) - \tfrac{\frac{1-\sin \theta}{\cos \theta} }{1+ \tfrac{\lambda_\disk(\theta,m)+m}{\lambda_\disk(\theta,m)-m}\left(\frac{1-\sin \theta}{\cos \theta}\right)^2 }}}.
	\end{equation}
	Moreover, for this particular constant $C_{\theta,m}$, we will also show at the end of our proof of~\Cref{Thm:DPW_QD} that equality holds in \eqref{Ineq_:main_thm} if and only if~$\Omega$ is a disk.
	
	Note that $ \lambda_\disk (\theta,m )$ can be computed in terms of Bessel functions, since using separation of variables one can verify that $\lambda_\disk(\theta,m)$ is the first positive solution $\lambda$ to the equation
	\begin{equation} \label{Eq:EigenEq_disk}
		(\lambda+m) \dfrac{1-\sin\theta}{\cos\theta} J_0 \big(\sqrt{\lambda^2-m^2}\big) - \sqrt{\lambda^2-m^2} J_1 \big(\sqrt{\lambda^2-m^2}\big) = 0.
	\end{equation}
	In particular, this shows that indeed the constant $C_{\theta,m}$ in \eqref{eq:Cthetam} only depends on $\theta$ and $m$.
\end{remark}

\begin{remark} \label{Rmk:DPW_inf_mass}
	Let us compare \Cref{Thm:DPW_QD} with the plot in \Cref{Fig:Plot_Antunes}, which corresponds to the case $\theta=m=0$ for domains with $|\Omega|=\pi$.
	On the one hand, an inspection of the data used to generate the plot hints a linear bound of the form 
	\begin{equation}\label{Eq:linear_bound_num}
		\sqrt{\pi} \lambda_\Omega(0,0) \leq \sqrt{\pi}  \lambda_\disk(0,0) + C_{\textrm{N}}\, \delta_{\Omega}, \quad \text{ with } {C_{\textrm{N}} \approx 1.965}. 
	\end{equation}
	On the other hand, for the choice of parameters $\theta=m=0$ and for domains with $|\Omega|=\pi$, our PW-type inequality in~\Cref{Thm:DPW_QD} with the explicit constant~$C_{\theta,m}$ in~\eqref{eq:Cthetam} reads as\footnote{In this case $C_{0,0}=\sqrt{\pi }K_{0,0}$ since $K_{0,0} > \sqrt{C_\mathrm{PW}/\pi}$.
		Indeed, since $\lambda_\disk(0,0) \approx 1.435$, one straightforwardly verifies that 
		$ K_{0,0}  = (C_{\mathrm{PW}}/\pi +j_{0,1}/2)/(\lambda_\disk(0,0)-1/2) \approx 18.056 > 15.675 \approx C_\mathrm{PW}/\pi > \sqrt{C_\mathrm{PW}/\pi}$.}
	\begin{equation} \label{Eq:DPW_inf_mass}
		\sqrt{\pi} \lambda_\Omega(0,0) - \sqrt{\pi}\lambda_\disk(0,0) \leq C_{0,0} \, \delta_\Omega, \quad \text{ with }   C_{0,0} = \sqrt{\pi} \frac{\frac{ C_{\mathrm{PW}}}{\pi}  + \frac{j_{0,1}}{2} }{\textstyle \lambda_\disk(0,0)-\frac{1}{2}} \approx 32.004.
	\end{equation}
	This suggests that \Cref{Thm:DPW_QD} could be true with a constant smaller than the one given in~\Cref{Rmk:Constant} and opens the question of which is the optimal (smallest) constant~$C>0$ for which \Cref{Thm:DPW_QD} holds true. Actually, for large enough isoperimetric deficits $\delta_\Omega$ one can improve the right-hand side of~\eqref{Eq:DPW_inf_mass}.  This is shown in \Cref{Sec:alternative}, where we derive another PW-type inequality for $\lambda_\Omega(0,0)$ using a main result of~\cite{Antunes2021}. As we will see, this approach (which only covers the case $\theta=m=0$) leads to an estimate whose right-hand side is nonlinear in~$\delta_\Omega$, in contrast to what the numerical simulation in~\Cref{Fig:Plot_Antunes} hints. Although it is better than \eqref{Eq:DPW_inf_mass} for large isoperimetric deficits, it is much worse than~\eqref{Eq:DPW_inf_mass} for small isoperimetric deficits.
\end{remark}

\begin{remark}
	Since $\lim_{\theta\downarrow-\frac{\pi}{2}}\lambda_\Omega(\theta, m)=\sqrt{\Lambda_\Omega+m^2}$ (see \cite[Proposition 3.8]{DuranMasSanzPerela2026}), \eqref{Ineq_:main_thm} naturally leads to an inequality of the form
	\begin{equation} 
		\sqrt{|\Omega|\Lambda_\Omega}
		-\sqrt{\pi \Lambda_\disk} \leq C \, \delta_\Omega.
	\end{equation}
	That is, a PW-type inequality for the square root of the first eigenvalue of the Dirichlet Laplacian which is linear with respect to the isoperimetric deficit.
	Unfortunately, the constant obtained here (taking into account \Cref{Rmk:Constant}) does not improve any of the known upper bounds for $\Lambda_\Omega$; see~\cite{Freitas2021,Ftouhi2021}. 
	
\end{remark}

Let us conclude this introduction with a brief summary of our arguments to prove \Cref{Thm:DPW_QD}.
The key ingredient is a connection  between~$\lambda_\Omega(\theta,m)$ and the first eigenvalue~$\mu_\Omega(a)$ of the so-called $\overline\partial$-Robin Laplacian with parameter~$a>0$; we recall these objects in~\Cref{Sec:PW_dbarRobin}, and for more details we refer to~\cite{Duran2026,Duran2027,DuranMasSanzPerela2026}.
In particular, in \Cref{Prop:recipe_bounds} ---which is a restatement of~\cite[Theorem~2.2~$(i)$ and $(iii)$]{Duran2027}--- we recall a recipe to transfer upper bounds from~$\mu_\Omega(a)$ to upper bounds for~$\lambda_\Omega(\theta,m)$, and vice versa. Thanks to this, \Cref{Thm:DPW_QD} is ``reduced'' to obtain a PW-type inequality for~$\mu_\Omega(a)$ with a suitable dependence on the parameter $a$ and the isoperimetric deficit $\delta_\Omega$; see~\Cref{Cor:PW_dabrRobin} in~\Cref{Sec:PW_dbarRobin}.
But since the first eigenvalue~$\mu_\Omega(a)$ is smaller or equal than the first eigenvalue of the Robin Laplacian of the same parameter~$a$, and they coincide if $\Omega = \disk$ (see~\cite[Remark~1.4]{Duran2026}), one can ``reduce'' again the argument to establish a PW-type inequality for the Robin Laplacian (still with this suitable dependence in the constants mentioned above).
This last inequality is established in \Cref{Thm:PW_Robin}, in \Cref{Sec:PW_Robin}, following the ideas of~\cite[Sections II and III]{Payne1961}.
We mention that here is the only point where we need to assume that $\Omega$ is simply connected; see the proof of \Cref{Prop:PW_estimate} for more details. 

This work is organized in reverse order to the previous description of the proof: 
In \Cref{Sec:PW_Robin} we study PW-type inequalities for the Robin Laplacian, in \Cref{Sec:PW_dbarRobin} we transfer them to the $\overline\partial$-Robin Laplacian, and finally in \Cref{Sec:Proof_MainThm} we establish our main result, \Cref{Thm:DPW_QD}. But, first of all, in \Cref{Sec:alternative} we give the alternative approach mentioned in \Cref{Rmk:DPW_inf_mass} to derive a nonlinear PW-type inequality for $\lambda_\Omega(0,0)$. 
In addition, to have a general picture of the case $\theta=m=0$, in \Cref{Ss:final_plot} we gather in a plot some of the estimates discussed in this work (proved and conjectured) together with the data of \Cref{Fig:Plot_Antunes}.

\subsection{An alternative approach}\label{Sec:alternative}

The following inequality is proven in~\cite[Theorem~3]{Antunes2021}: if~$\Omega\subset\R^2$ is a $C^\infty$ simply connected domain, then
\begin{equation}\label{Main_ieq:ABLO2021}
	\lambda_\Omega(0,0)\leq\frac{|\partial\Omega|}{\pi \rho^2+|\Omega|}\lambda_\disk(0,0),
\end{equation}
with equality if and only if $\Omega$ is a disk. Here, $\rho$ denotes the inradius of $\Omega$, that is, the radius of the largest open disk included in $\Omega$; obviously, $\pi \rho^2\leq|\Omega|$. By a simple computation, one can naturally rewrite \eqref{Main_ieq:ABLO2021} as the following PW-type inequality:
\begin{equation}\label{Main_ieq:ABLO2021_PW1}
	\sqrt{|\Omega|}\lambda_\Omega(0,0)
	- \sqrt{\pi}\lambda_\disk(0,0)
	\leq
	\sqrt{\pi}\lambda_\disk(0,0)\Bigg(
	\frac{2|\Omega|}{(\pi \rho^2+|\Omega|)}
	\frac{|\partial\Omega|}
	{\sqrt{4\pi|\Omega|}}-1\Bigg).
\end{equation}
Since the right-hand side in \eqref{Main_ieq:ABLO2021_PW1} does not depend only on the isoperimetric deficit $\delta_\Omega$, as it is the case of our obtained inequality \eqref{Eq:DPW_inf_mass}, it may be desirable to estimate the factor containing~$\rho$ solely in terms of $|\Omega|$ and $|\partial\Omega|$. This can be done using the sharp estimate 
\begin{equation}\label{Ineq:Bonnesen}
2\pi \rho\geq|\partial\Omega|-\sqrt{|\partial\Omega|^2-4\pi|\Omega|},
\end{equation}
for which the equality holds if $\Omega$ is a disk
---\eqref{Ineq:Bonnesen} is a particular instance of the so-called Bonnesen inequalities; see \cite[Theorem 2 and Equation (15)]{Osserman1979}.
Using \eqref{Ineq:Bonnesen} in \eqref{Main_ieq:ABLO2021_PW1} leads to
\begin{equation}\label{Main_ieq:ABLO2021_PW2}
\begin{split}
\sqrt{|\Omega|}\lambda_\Omega(0,0)
- \sqrt{\pi} \lambda_\disk(0,0)
&
\leq
\sqrt{\pi}\lambda_\disk(0,0)
\Bigg(\frac{|\partial\Omega|}{\sqrt{4\pi|\Omega|}}-1+
\sqrt{\frac{|\partial\Omega|^2}{4\pi|\Omega|}-1}\Bigg)\\
&=\sqrt{\pi} \lambda_\disk(0,0)\left(\delta_\Omega+\sqrt{\delta_\Omega(\delta_\Omega+2)}\right).
\end{split}
\end{equation}
Note that the right-hand side of \eqref{Main_ieq:ABLO2021_PW2} behaves like 
$2 \sqrt{\pi} \lambda_\disk(0,0)\, \delta_\Omega$ as 
$\delta_\Omega\uparrow+\infty$, that is, it is asymptotically linear in 
$\delta_\Omega$.  Moreover, since $2\sqrt{\pi}  \lambda_\disk(0,0)\approx 5.086 <32.004\approx C_{0,0}$, \eqref{Main_ieq:ABLO2021_PW2} gives a better estimate than \eqref{Eq:DPW_inf_mass} for large isoperimetric deficits. However, the right-hand side of~\eqref{Main_ieq:ABLO2021_PW2} behaves like $\sqrt{2\pi }\lambda_\disk(0,0)\sqrt{\delta_\Omega}$ as 
$\delta_\Omega\downarrow0$. Therefore, for small isoperimetric deficits,~\eqref{Main_ieq:ABLO2021_PW2} is a much worse estimate than~\eqref{Eq:DPW_inf_mass}.

From these considerations, we realize that the approach developed from \eqref{Main_ieq:ABLO2021} to \eqref{Main_ieq:ABLO2021_PW2} is not adequate to derive a PW-type inequality for $\theta=m=0$ with a right-hand side linear in $\delta_\Omega$, as the numerical simulation described in \Cref{Fig:Plot_Antunes} suggests to exists. 
As we have seen in \eqref{Eq:DPW_inf_mass}, which is a particular case of \Cref{Thm:DPW_QD}, such a linear estimate actually holds. 
However, if one can improve the estimate \eqref{Main_ieq:ABLO2021} from \cite[Theorem 3]{Antunes2021} to 
\begin{equation}\label{Main_ieq:ABLO2021v2}
\lambda_\Omega(0,0)\leq\frac{|\partial\Omega|}{2|\Omega|}\lambda_\disk(0,0)
\end{equation}
---recall that $\pi \rho^2\leq|\Omega|$---, arguing as we did to get \eqref{Main_ieq:ABLO2021_PW1} from \eqref{Main_ieq:ABLO2021}, one would get 
\begin{equation}\label{Main_ieq:ABLO2021v3}
\sqrt{|\Omega|} \lambda_\Omega(0,0)
- \sqrt{\pi}  \lambda_\disk(0,0)
\leq \sqrt{\pi}  \lambda_\disk(0,0)\,\delta_\Omega.
\end{equation}
This would be a better estimate than \eqref{Eq:DPW_inf_mass} since $\sqrt{\pi} \lambda_\disk(0,0)\approx 2.5429 <32.004\approx C_{0,0}$.
Moreover, this estimate still agrees with the numerical simulation described in \Cref{Fig:Plot_Antunes}, since the straight line passing through $(1,\sqrt{\pi} \lambda_\disk(0,0))\in\R^2$ with slope $\sqrt{\pi}\lambda_\disk(0,0)$ stays above the plotted data points; see~\Cref{Fig:Plot_DMS-P} (right). Therefore, at this point a natural question is to prove or disprove~\eqref{Main_ieq:ABLO2021v2} for~$C^\infty$ simply connected domains $\Omega\subset\R^2$. This remains as an open problem.

\subsection{Summary of estimates in a plot} \label{Ss:final_plot}

This short section is devoted to giving a general picture on the main estimates discussed so far in the case $\theta=m=0$ and for domains with area $\pi$.
We do it with the help of \Cref{Fig:Plot_DMS-P}.

\begin{figure}[h!]
	\centering
	\includegraphics[width=0.485\linewidth]{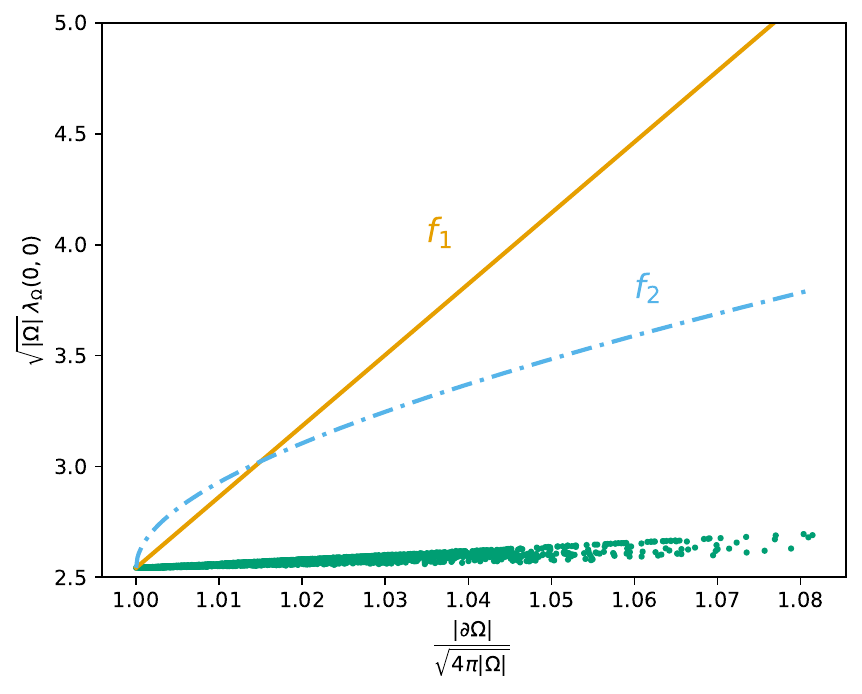}
	\includegraphics[width=0.498\linewidth]{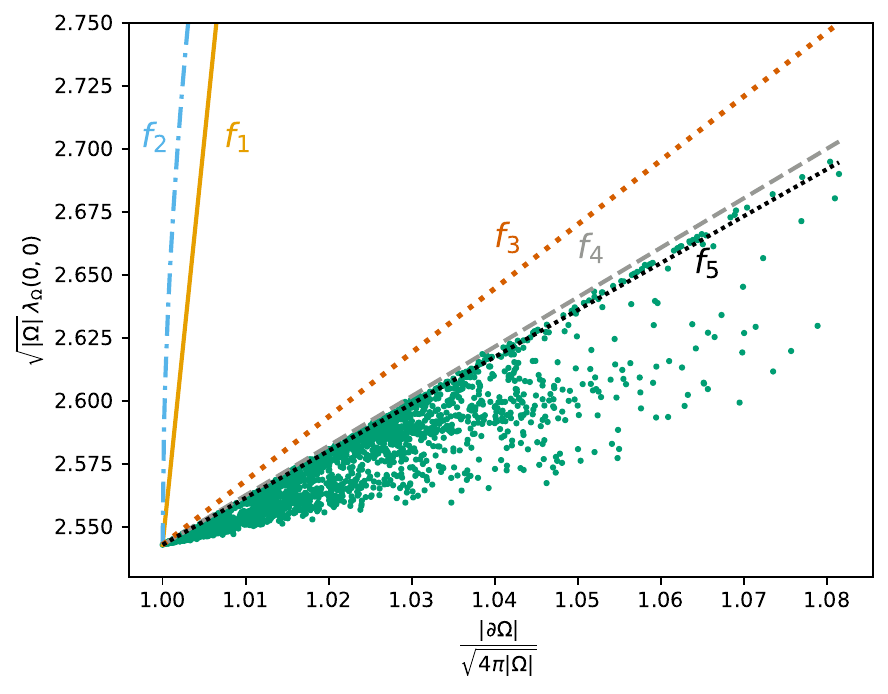}
	\caption{Summary of estimates in the case $\theta=m=0$ and for domains $\Omega$ with area $\pi$. 
	In the left, plot of the numerical data from \Cref{Fig:Plot_Antunes} (green dots) together with the established upper bounds for $\sqrt{\pi} \lambda_\Omega(0,0)$: the linear bound obtained in~\Cref{Thm:DPW_QD} with the explicit constant in~\eqref{eq:Cthetam} (continuous orange line $f_1$) and the nonlinear bound established in~\Cref{Sec:alternative} (dotted-dashed blue line $f_2$).
	In the right, the same picture zoomed with the addition of the conjectured bound~\eqref{Main_ieq:ABLO2021v3} (vermilion dotted line $f_3$), the numerical bound~\eqref{Eq:linear_bound_num} (gray dashed line $f_4$), and the announced lower bound for the optimal constant (black dotted line $f_5$).}
	\label{Fig:Plot_DMS-P}
\end{figure}

The left plot in \Cref{Fig:Plot_DMS-P} gathers the numerical data from \Cref{Fig:Plot_Antunes} and the graph of
\begin{equation}
\begin{split}
&f_1(x):=\sqrt{\pi} \lambda_\disk(0,0) + C_{0,0}( x - 1 ),\\
&f_2(x):=\sqrt{\pi} \lambda_\disk(0,0)+\sqrt{\pi} \lambda_\disk(0,0)
(x-1+\sqrt{x^2-1}),
\end{split}
\end{equation}
for $x\in[1,+\infty)$.
The functions $f_1$ and $f_2$ represent, as functions of 
$x=\delta_{\Omega} +1 $ for domains with $|\Omega|=\pi$, the estimates for $\sqrt{|\Omega|}\lambda_\Omega(0,0)$ given, respectively, by our main theorem ---see~\eqref{Eq:DPW_inf_mass}--- and the discussion in \Cref{Sec:alternative} ---see~\eqref{Main_ieq:ABLO2021_PW2}.
This illustrates what we mentioned in~\Cref{Rmk:DPW_inf_mass} and justifies visually the quest for the optimal constant $C$ for which~\Cref{Thm:DPW_QD} holds true.

The right plot in \Cref{Fig:Plot_DMS-P} is a zoomed version of the left one with three additional functions. First, 
\begin{equation}
	f_3(x):=\sqrt{\pi} \lambda_\disk(0,0) + \sqrt{\pi} \lambda_\disk(0,0)
	(x-1)
\end{equation}
represents the (unproven) inequality \eqref{Main_ieq:ABLO2021v3}; as discussed in \Cref{Sec:alternative}, recall that if one knows that \eqref{Main_ieq:ABLO2021v2} holds true, then \eqref{Main_ieq:ABLO2021v3} shows that $\sqrt{|\Omega|}\lambda_\Omega(0,0)\leq f_3(\delta_{\Omega} +1)$.
Second, for the benefit of the reader, we represent
\begin{equation}
	f_4(x):=\sqrt{\pi } \lambda_\disk(0,0)+C_\mathrm{N}
	\left(x-1\right),
\end{equation}
which is the linear bound \eqref{Eq:linear_bound_num} hinted by the data used to generate the plot in \Cref{Fig:Plot_Antunes}. 
Last, we represent
\begin{equation}
	f_5(x):= \sqrt{\pi } \lambda_\disk(0,0) + C_{\textrm{LB}} (x-1),
\end{equation}
where
\begin{equation}
	C_\mathrm{LB}
	:=  {\textstyle \frac{\sqrt{\pi }\,\lambda_\disk}{6(2\lambda_\disk-1)}
		\left(4(\lambda_\disk-1)-(2\lambda_\disk-3)^2
		\frac{J_2(\lambda_\disk)+J_3(\lambda_\disk)}
		{J_2(\lambda_\disk)-J_3(\lambda_\disk)}-(2\lambda_\disk+1)^2\frac{J_2(\lambda_\disk)-J_1(\lambda_\disk)}
		{J_2(\lambda_\disk)+J_1(\lambda_\disk)}\right)}\approx 1.863.
\end{equation}
Here we used the notation  $\lambda_\disk:=\lambda_\disk(0,0)$, and $J_1$, $J_2$, and $J_3$ denote the Bessel functions of the first kind of order $1$, $2$, and $3$, respectively. The constant $C_\mathrm{LB}$ is a lower bound for the optimal constant. Indeed, in a work in preparation, using perturbative arguments, we will show that if~$C>0$ is such that 
\begin{equation}\label{Eq:PW_Dirac_generalC}
	\sqrt{|\Omega|} \, \lambda_\Omega(0,0) - \sqrt{\pi}\, \lambda_\disk(0,0) \leq C \delta_\Omega
\end{equation}
for every bounded simply connected domain $\Omega\subset \R^2$ with $C^2$~boundary, then $C \geq C_\mathrm{LB}$. Note that 
\begin{equation}
	C_{\textrm{N}} \approx 1.965>1.863\approx C_\mathrm{LB},
\end{equation}
which indicates that $C_\mathrm{LB}$ is a quite tight lower bound for the optimal constant in \eqref{Eq:PW_Dirac_generalC}.

\section{A PW-type inequality for the Robin Laplacian} \label{Sec:PW_Robin}

Consider the eigenvalue problem for the Robin Laplacian
\begin{equation} 
    \begin{cases}
        -\Delta u = \mu u & \text{in } \Omega, \\
        \nu \cdot \nabla u  +a u=0 &\text{on } \partial \Omega,
    \end{cases}
\end{equation}
with boundary parameter $a>0$.
Recall that $\nu$ is the outward unit normal vector on $\partial\Omega$.
It is well known that its first (smallest) eigenvalue $\mu$, denoted by 
$\mu_\Omega^\mathrm{Rob}(a)$, is characterized by 
\begin{equation} \label{Eq:RQ_Robin_mu}
    \mu_\Omega^\mathrm{Rob}(a) = \inf_{u\in H^1(\Omega;\R)\setminus\{0\}}\dfrac{\int_\Omega |\nabla u|^2 + a\int_{\partial\Omega} |u|^2}{\int_\Omega |u|^2};
\end{equation}
see for example \cite[Section 4.1]{Henrot2017}.
The first step in our proof of \Cref{Thm:DPW_QD} is to provide a PW-type inequality for 
$\mu_\Omega^\mathrm{Rob}(a)$; see \Cref{Thm:PW_Robin} below. As far as we know, this inequality (which is of independent interest) is not explicitly stated in the literature, although it readily follows from \cite[Section III]{Payne1961}. 
In its statement, we will use the functions $F:[0,j_{0,1}]  \rightarrow  \R$ and~$G:[0,j_{0,1}]  \rightarrow  \R$ defined by
    \begin{equation}\label{Eq:def_FG}
            F(x):= \pi x^2\left( \dfrac{1}{J_0^2(x)+J_1^2(x)}-1 \right)
        \quad \text{and} \quad
            G(x):= \pi x\dfrac{2J_0(x)J_1(x)}{J_0^2(x)+J_1^2(x)};
    \end{equation}
we recall that $J_0$ and $J_1$ denote the Bessel functions of the first kind of order $0$ and $1$, respectively, and $j_{0,1}$ is the first positive zero of~$J_0$.

\begin{theorem} \label{Thm:PW_Robin}
    Let $\Omega\subset \R^2$ be a bounded simply connected domain with $C^2$~boundary and let~$\delta_\Omega$ denote its isoperimetric deficit, as in \eqref{Eq:IsopDeficit}.
    Then, for all $a>0$ there holds
    \begin{equation} \label{Eq:PW_Robin_a}
        |\Omega| \, \mu_\Omega^\mathrm{Rob} \bigg(\frac{a}{\sqrt{|\Omega|}}  \bigg)
         - \pi \, \mu_\disk^\mathrm{Rob}\bigg(\frac{a}{\sqrt{\pi}}  \bigg) 
        \leq 
        F \left(\sqrt{\mu_\disk^\mathrm{Rob}\big(\tfrac{a}{\sqrt{\pi}}\big)} \right) \delta_\Omega( \delta_\Omega+2) +  G \left(\sqrt{\mu_\disk^\mathrm{Rob}\big(\tfrac{a}{\sqrt{\pi}}\big)} \right) \delta_\Omega,
    \end{equation}
    where $F$ and $G$ are as in \eqref{Eq:def_FG}. In particular, 
    \begin{equation} \label{Eq:PW_Robin_uniform}
        |\Omega| \, \mu_\Omega^\mathrm{Rob} \bigg(\dfrac{a}{\sqrt{|\Omega|}}  \bigg)
        - \pi \, \mu_\disk^\mathrm{Rob}\bigg(\dfrac{a}{\sqrt{\pi}}  \bigg) 
        \leq  
        C_{\mathrm{PW}} \, \delta_\Omega( \delta_\Omega+2) + \pi j_{0,1} \, \delta_\Omega
    \end{equation}
    and the equality holds in \eqref{Eq:PW_Robin_uniform} if and only if $\Omega$ is a disk. Here, $C_{\mathrm{PW}}>0$ is the constant in~\eqref{Eq:PW_Dirichlet}.
\end{theorem}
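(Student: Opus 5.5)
The plan is to adapt the original Payne–Weinberger argument from \cite[Sections II and III]{Payne1961}, using a trial function in \eqref{Eq:RQ_Robin_mu} built from the level sets of the distance to the boundary. Both sides of \eqref{Eq:PW_Robin_a} are scale invariant, since $\mu^\mathrm{Rob}_{t\Omega}(a/t)=t^{-2}\mu^\mathrm{Rob}_\Omega(a)$. So we may assume $|\Omega|=\pi$ and must show
\[
\pi\,\mu^\mathrm{Rob}_\Omega(b)-\pi\,\mu^\mathrm{Rob}_\disk(b)\le F(k)\,\delta_\Omega(\delta_\Omega+2)+G(k)\,\delta_\Omega ,\qquad b=\tfrac{a}{\sqrt\pi},\quad k:=\sqrt{\mu^\mathrm{Rob}_\disk(b)} .
\]

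\textbf{Disk data.} Separation of variables shows that the first Robin eigenfunction of $\disk$ is $J_0(kr)$ with $k\in(0,j_{0,1})$, and the boundary condition gives $b=kJ_1(k)/J_0(k)>0$. The classical Bessel identity $\int_\disk J_0(k|x|)^2\,dx=\pi\big(J_0^2(k)+J_1^2(k)\big)$ produces exactly the denominators appearing in $F$ and $G$.

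\textbf{Trial function.} Let $L:=|\partial\Omega|=2\pi(1+\delta_\Omega)$ and $d(x):=\mathrm{dist}(x,\partial\Omega)$. Let $\Omega_t=\{d>t\}$, with $A(t)=|\Omega_t|$ and $L(t)=|\partial\Omega_t|$ (for a.e.\ $t$). Because $\Omega$ is simply connected, the Sz.-Nagy inequality gives $L(t)\le L-2\pi t$. This is the only place where simple connectivity enters, and it is what \Cref{Prop:PW_estimate} should encode.

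Following Payne–Weinberger, take $u(x)=\psi(d(x))$ with $\psi$ built from $J_0$, and compute with the coarea formula:
\[
\int_\Omega|\nabla u|^2=\int_0^\rho \psi'(t)^2L(t)\,dt,\qquad \int_\Omega u^2=\int_0^\rho\psi(t)^2L(t)\,dt,\qquad b\int_{\partial\Omega}u^2=b\,L\,\psi(0)^2 .
\]
The key choice is to reparametrize by the area variable $A(t)$. One compares with the ring-shaped model domain having perimeter $L$ and area $\pi$, and chooses $\psi$ so that the Rayleigh quotient is monotone in $L(t)$, so that $L(t)\le L-2\pi t$ can be inserted directly. The volume term and the Dirichlet energy then reproduce the disk quotient, up to an error. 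Exactly as in PW's Section III, this error is controlled by $L^2/(4\pi^2)-1=\delta_\Omega(\delta_\Omega+2)$ times a Bessel expression, and the expression evaluated at frequency $k$ is $F(k)/\pi$.

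The boundary term exceeds the disk's $2\pi b J_0(k)^2$ by $2\pi b\,\delta_\Omega J_0(k)^2=2\pi\delta_\Omega kJ_0(k)J_1(k)$. Dividing by the normalization $J_0^2+J_1^2$ and multiplying by $\pi$ gives precisely $G(k)\,\delta_\Omega$. This decomposition is what makes \eqref{Eq:PW_Robin_a} plausible with these exact $F$ and $G$.

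\textbf{Main obstacle.} The hardest step is the careful PW transplantation estimate for the energy term with the $J_0$ profile at a non-Dirichlet frequency $k<j_{0,1}$. PW's computation uses $J_0(j_{0,1})=0$, whereas here $J_0(k)\ne0$. So boundary terms from integrating by parts in $t$ survive, and they must be shown either to combine into $F(k)$ or to be absorbed by the Robin term. I would first redo PW's computation with a general endpoint value, keeping all boundary contributions, and check that the residual has a favorable sign thanks to $J_0(k)J_1(k)>0$.

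\textbf{Uniform bound \eqref{Eq:PW_Robin_uniform}.} Since $2J_0J_1\le J_0^2+J_1^2$, we have $G(x)\le \pi x<\pi j_{0,1}$ on $(0,j_{0,1})$. I would also check that $F$ is increasing on $[0,j_{0,1}]$; its value at $j_{0,1}$ is $\pi j_{0,1}^2(1/J_1^2(j_{0,1})-1)=C_{\mathrm{PW}}$ because $J_0(j_{0,1})=0$. The increase can be verified by differentiating, using $\frac{d}{dx}(J_0^2+J_1^2)=-2J_1^2/x$, which gives $\frac{d}{dx}\big(x^2/(J_0^2+J_1^2)\big)>0$. Both coefficient bounds are strict since $k<j_{0,1}$, so equality in \eqref{Eq:PW_Robin_uniform} forces $\delta_\Omega=0$, that is, $\Omega$ is a disk. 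Conversely, for a disk both sides vanish.
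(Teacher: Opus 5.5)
Your outline has the same architecture as the paper's proof: scaling to $|\Omega|=\pi$, interior parallels reparametrized by area, comparison with the annulus of outer radius $r_2=|\partial\Omega|/(2\pi)$ and area $\pi$, and the relation $bJ_0(k)=kJ_1(k)$. Your bookkeeping of the boundary excess as $G(k)\,\delta_\Omega$ is right, and your treatment of \eqref{Eq:PW_Robin_uniform} and of the equality case is fine.

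\textbf{The gap is the central step.} You never fix $\psi$. The claim that the energy error is exactly $\frac{F(k)}{\pi}\,\delta_\Omega(\delta_\Omega+2)$ is only asserted ``as in PW''. Your ``main obstacle'' paragraph concedes that you do not know whether the terms produced by $J_0(k)\neq0$ cancel or have a favorable sign. Since $F$ and $G$ are precisely the output of this computation, \eqref{Eq:PW_Robin_a} is not yet proved.

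Also, the Sz.-Nagy bound $L(t)\le|\partial\Omega|-2\pi t$ is not inserted directly. Write $A(t):=|\{d<t\}|=\pi-|\Omega_t|$ for the area of the boundary strip. The area reparametrization needs the integrated consequence $L(t)^2\le(|\partial\Omega|-2\pi t)^2\le|\partial\Omega|^2-4\pi A(t)$, which uses $A(t)\le|\partial\Omega|t-\pi t^2$. Equivalently, $|r'(t)|\le1$ for $r(t):=\frac{1}{2\pi}\sqrt{|\partial\Omega|^2-4\pi A(t)}$.

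\textbf{The missing piece is explicit and needs no sign argument.} Take $u=v(r(d))$. The bound $|r'|\le1$ and the coarea formula with $L(t)\,dt=-2\pi r\,dr$ transform the $L^2$ norm and the boundary term \emph{exactly} into annulus quantities; only the gradient term is bounded. This gives \eqref{Eq:PW_estimate} with $r_2=1+\delta_\Omega$ and $r_1^2=r_2^2-1=\delta_\Omega(\delta_\Omega+2)$.

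Now choose $v(r)=J_0\big(k\sqrt{r^2-r_1^2}\big)$, i.e.\ $\psi(t)=J_0\big(k\sqrt{|\Omega_t|/\pi}\big)$. Substitute $s=k\sqrt{r^2-r_1^2}$ and write $S:=J_0^2(k)+J_1^2(k)$. Then
\[
\int_{r_1}^{r_2}v^2\,r\,dr=\tfrac12 S,\qquad
\int_{r_1}^{r_2}|v'|^2\,r\,dr=\int_0^k sJ_1^2(s)\,ds+k^2r_1^2\int_0^k\frac{J_1^2(s)}{s}\,ds
=\tfrac{k^2}{2}S-kJ_0(k)J_1(k)+\tfrac{k^2}{2}(1-S)\,r_1^2 .
\]
The term $-kJ_0(k)J_1(k)$ is the one you were worried about. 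It is cancelled exactly by the Robin term, since $b\,r_2J_0^2(k)=r_2\,kJ_0(k)J_1(k)$, which leaves $kJ_0(k)J_1(k)\,\delta_\Omega$. Dividing by $\tfrac12 S$ and multiplying by $\pi$ gives
\[
\pi\mu^{\mathrm{Rob}}_\Omega(b)\le\pi k^2+F(k)\,\delta_\Omega(\delta_\Omega+2)+G(k)\,\delta_\Omega,
\]
which is \eqref{Eq:PW_Robin_a}.
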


The estimate \eqref{Eq:PW_Robin_uniform} is a straightforward consequence of \eqref{Eq:PW_Robin_a} and the properties of $F$ and $G$ stated in the following lemma, whose proof is given at the end of this section. 
These properties also show how \eqref{Eq:PW_Robin_a} leads to known upper bounds for the Dirichlet and Neumann Laplacians as $a\uparrow+\infty$ and $a\downarrow0$, respectively; see \Cref{Rmk:Rob_to_DN} below.

\begin{lemma} \label{Lemma:Bessels}
    The functions  $F$ and $G$ defined in \eqref{Eq:def_FG} are continuous in $[0,j_{0,1}]$ and 
    satisfy the following properties:
    \begin{enumerate}[label=$(\roman*)$]
        \item $F$ is strictly increasing in $[0,j_{0,1}]$. In particular, 
        $0=F(0)<F(x)<F(j_{0,1})=C_\mathrm{PW}$ for all $x\in(0,j_{0,1})$, where $C_\mathrm{PW}$ is the constant given in \eqref{Eq:PW_Dirichlet}.
        \item $G(0)=G(j_{0,1})=0<G(x)<\pi j_{0,1}$ for all $x\in(0,j_{0,1})$.
    \end{enumerate}
\end{lemma}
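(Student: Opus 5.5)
We prove Lemma~\ref{Lemma:Bessels} using the identities $J_0'=-J_1$ and $(xJ_1)'=xJ_0$, i.e.\ $J_1'=J_0-J_1/x$, together with the fact that $J_0>0$ and $J_1>0$ on $(0,j_{0,1})$. Continuity is immediate, since $J_0^2+J_1^2>0$ everywhere on $[0,j_{0,1}]$ ($J_0$ and $J_1$ have no common zeros, and $J_0(0)=1$).

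\emph{Endpoint values.} At $x=0$ we have $J_0(0)=1$ and $J_1(0)=0$, so $F(0)=0=G(0)$. At $x=j_{0,1}$ we have $J_0=0$, so $F(j_{0,1})=\pi j_{0,1}^2(1/J_1^2(j_{0,1})-1)=C_{\mathrm{PW}}$ and $G(j_{0,1})=0$.

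\emph{Property (ii).} Positivity of $G$ on $(0,j_{0,1})$ follows from $J_0,J_1>0$ there. For the upper bound, use AM--GM: $2J_0J_1\le J_0^2+J_1^2$, so $G(x)\le\pi x<\pi j_{0,1}$ for $x<j_{0,1}$. This settles (ii).

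\emph{Property (i).} Write $F=\pi x^2(1/h-1)$ with $h:=J_0^2+J_1^2$. Then
\[
h'=2J_0(-J_1)+2J_1\Big(J_0-\frac{J_1}{x}\Big)=-\frac{2J_1^2}{x}.
\]
Hence $h$ is strictly decreasing on $(0,j_{0,1}]$. Since $h(0)=1$, we get $0<h<1$, so $1/h-1>0$ and it is increasing. Thus $F$ is a product of two nonnegative functions, $\pi x^2$ and $1/h-1$, each strictly increasing on $(0,j_{0,1}]$, and therefore $F$ is strictly increasing. Combined with the endpoint values, this gives $0=F(0)<F(x)<F(j_{0,1})=C_{\mathrm{PW}}$.

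The only potential obstacle is the derivative identity for $h$; it follows directly from the recurrence relations above, so no real difficulty is expected.
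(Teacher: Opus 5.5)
Your proposal is correct and follows essentially the same route as the paper: the derivative identity $(J_0^2+J_1^2)'=-2J_1^2/x$ makes $1/(J_0^2+J_1^2)-1$ strictly increasing, so $F$ is a product of increasing functions, and (ii) follows from AM--GM. Your explicit remark that both factors are nonnegative (needed for the product of increasing functions to be increasing) is a small precision the paper leaves implicit.
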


\begin{remark}\label{Rmk:Rob_to_DN}
On the one hand, \eqref{Eq:PW_Robin_a} becomes the classical Payne-Weinberger inequality \eqref{Eq:PW_Dirichlet} for the Dirichlet Laplacian as~$a\uparrow+\infty$. This follows from \Cref{Lemma:Bessels} and the fact that $\lim_{a\uparrow+\infty} \mu_\Omega^\mathrm{Rob}(a) = \Lambda_\Omega$ (see \cite[Proposition 4.5]{Henrot2017}) and $\Lambda_\disk = j_{0,1}^2$. On the other hand, using that~$0\leq\mu_\Omega^\mathrm{Rob}(a)\leq a{|\partial\Omega|}/{|\Omega|}$ for all $a>0$ ---simply take $u=1$ in the variational characterization of $\mu_\Omega^\mathrm{Rob}(a)$--- and \Cref{Lemma:Bessels}, we easily see that both sides of \eqref{Eq:PW_Robin_a} tend to $0$ as~$a\downarrow0$. This later case corresponds to the Neumann Laplacian, where the first eigenvalue is $0$ independently of the domain $\Omega$.
\end{remark}

Upper bounds in the spirit of \eqref{Eq:PW_Robin_a} for the first eigenvalue of the Robin $p$-Laplacian in arbitrary dimensions can be found in \cite[Theorem 1.1]{Amato2024}. However, they are not suitable for our purposes because, first, they are only shown for convex domains and, second, in addition to $\delta_\Omega$, they also depend on the eigenvalue in $\Omega$ (rather than in $\disk$), in contrast to \eqref{Eq:PW_Robin_a}.

A key ingredient in the proof of~\Cref{Thm:PW_Robin} is the following upper bound due to Payne and Weinberger~\cite[Section III]{Payne1961}.\footnote{We point out a typo in the original paper of Payne and Weinberger regarding \eqref{Eq:PW_estimate}: in \cite[Equation (3.3)]{Payne1961} the boundary term $ar_2|v(r_2)|^2$ appears multiplied by a factor $2\pi$. However, in the next paragraph it is asserted that the right-hand side of \eqref{Eq:PW_estimate} is the Rayleigh quotient for the Laplacian in the annulus 
$\{x\in\R^2: r_1<|x|<r_2\}$ with Robin boundary conditions on 
$\{x\in\R^2: |x|=r_2\}$ and Neumann boundary conditions on $\{x\in\R^2: |x|=r_1\}$. Therefore, the factor $2\pi$ in the boundary term cancels out with the one in the solid integrals once written in polar coordinates.} Its proof is not explicitly given there, but only suggested from previous arguments. For the sake of clarity, we give its proof at the end of this section following the ideas of~\cite[Section II]{Payne1961}; a detailed proof is also given in \cite[Section 4]{Freitas2015}.

\begin{proposition}[\cite{Payne1961}] \label{Prop:PW_estimate}
    Let 
    $\Omega\subset \R^2$ be a bounded simply connected domain with $C^2$~boundary. Set
    $$r_1 := \dfrac{1}{2\pi}\sqrt{|\partial\Omega|^2-4\pi |\Omega|} \quad \text{and} \quad r_2:= \dfrac{1}{2\pi}|\partial\Omega|.$$
    Then, 
    \begin{equation} \label{Eq:PW_estimate}
        \mu_\Omega^\mathrm{Rob}(a) \leq  \dfrac{{ \int_{r_1}^{r_2} |\frac{d}{dr} v(r)|^2 \, rdr + ar_2|v(r_2)|^2 } }{{ \int_{r_1}^{r_2} |v(r)|^2 \, rdr } }
    \end{equation} 
for all $a>0$ and all $v\in C^1([r_1,r_2])\setminus\{0\}$.
\end{proposition}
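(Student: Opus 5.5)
We plan to follow the method of Payne and Weinberger \cite[Section II]{Payne1961}: build a test function in $H^1(\Omega)$ that depends only on the distance to the boundary, and use the geometry of the level sets of that distance.

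\textbf{Setup.} Let $t(x):=\mathrm{dist}(x,\partial\Omega)$, which is Lipschitz with $|\nabla t|=1$ a.e. Let $L(t)$ be the length of the level curve $\{x\in\Omega:\,\mathrm{dist}(x,\partial\Omega)=t\}$ and $A(t)$ the area of $\{x\in\Omega:\,\mathrm{dist}(x,\partial\Omega)>t\}$. By the coarea formula,
\begin{equation}
A(t)=\int_t^{\rho}L(s)\,ds \quad\text{and}\quad A'(t)=-L(t) \ \text{ a.e.},
\end{equation}
where $\rho$ is the inradius.

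\textbf{Key geometric input.} This is the step where simple connectivity is used: the inequality
\begin{equation}
L(t)\leq |\partial\Omega|-2\pi t
\end{equation}
holds for a.e. $t$. The standard proof uses the Gauss--Bonnet formula on the parallel sets of a simply connected $C^2$ domain, where the total curvature of the boundary equals $2\pi$ (Sz.-Nagy / Fiala / Hartman type arguments). We expect this to be the main obstacle, because level sets of the distance function are only Lipschitz curves and may split or develop corners past the cut locus. We would quote it from \cite{Payne1961} or \cite[Section 4]{Freitas2015} rather than reprove it, possibly after smooth approximation.

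\textbf{Change of variables.} Define $r(t)$ by
\begin{equation}
\pi\big(r_2^2-r(t)^2\big)=|\Omega|-A(t),
\end{equation}
so that $r(0)=r_2$ and $r(\rho)=r_1$, since $\pi(r_2^2-r_1^2)=|\Omega|$. Thus $r(t)$ is the radius in the model annulus enclosing the same area as the outer region $\{t(x)<t\}$. Differentiating gives $2\pi r\,r'=-L$. From the length bound and $A(t)\leq \int_t^\rho(|\partial\Omega|-2\pi s)\,ds$ one deduces $L(t)\leq 2\pi r(t)$, i.e. the model annulus circles are at least as long. The inequality $A(t)\leq\dots$ should in fact give $\pi r^2\geq (|\partial\Omega|-2\pi t)^2/(4\pi)$ after the computation, hence $2\pi r\ge |\partial\Omega|-2\pi t\ge L$.

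\textbf{Comparison of the Rayleigh quotients.} Given $v\in C^1([r_1,r_2])$, take $u(x):=v(r(t(x)))$. The three terms transform as follows.
\begin{itemize}
\item By the coarea formula, $\int_\Omega u^2=\int_0^\rho v(r)^2L\,dt=2\pi\int_{r_1}^{r_2}v^2\,r\,dr$, using $L\,dt=-2\pi r\,dr$.
\item Since $|\nabla u|=|v'(r)|\,|r'(t)|$, we get $\int_\Omega|\nabla u|^2=\int_0^\rho v'^2 r'^2L\,dt=\int |v'|^2\frac{L}{2\pi r}\,L\,dt$. Because $L\le 2\pi r$, this is bounded by $2\pi\int_{r_1}^{r_2}|v'|^2\,r\,dr$.
\item The boundary term is exactly $a\,v(r_2)^2|\partial\Omega|=2\pi a\,r_2\,v(r_2)^2$, since $u$ is constant on $\partial\Omega$.
\end{itemize}
Dividing by $2\pi$ and inserting $u$ into the variational characterization \eqref{Eq:RQ_Robin_mu} gives \eqref{Eq:PW_estimate}. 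One must also check that $u\in H^1(\Omega)$, which holds since $t$ is Lipschitz and $r$ is monotone and absolutely continuous; $u\neq0$ because $v\neq0$ on a set of positive measure, and by continuity of $v$ this carries over.
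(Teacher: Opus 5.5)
Your plan follows the paper's proof. You use the same $r(t)$: your $\pi(r_2^2-r(t)^2)=|\Omega|-A(t)$ is the paper's definition written with the inner area. You also use Sz.-Nagy's bound $L(t)\le|\partial\Omega|-2\pi t$ as the only place where simple connectivity enters, the same test function $u=v(r(\operatorname{dist}(\cdot,\partial\Omega)))$, and the same coarea change of variables.

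\textbf{Gap.} The derivation of the key bound $L(t)\le 2\pi r(t)$ does not work as written. With your convention
\[
A(t)=|\{\operatorname{dist}>t\}|=\int_t^\rho L(s)\,ds,
\]
the inequality $A(t)\le\int_t^\rho(|\partial\Omega|-2\pi s)\,ds$ is an upper bound on the inner area. Since
\[
\pi r(t)^2=\frac{|\partial\Omega|^2}{4\pi}-|\Omega|+A(t),
\]
it only gives an upper bound on $r(t)$. That is the wrong direction, and no computation will turn it into $\pi r(t)^2\ge(|\partial\Omega|-2\pi t)^2/(4\pi)$.

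\textbf{Fix.} Apply Sz.-Nagy on $[0,t]$ rather than on $[t,\rho]$, so that you bound the outer area:
\[
|\Omega|-A(t)=\int_0^t L(s)\,ds\le|\partial\Omega|t-\pi t^2 .
\]
Then
\[
4\pi^2 r(t)^2=|\partial\Omega|^2-4\pi\big(|\Omega|-A(t)\big)\ge|\partial\Omega|^2-4\pi|\partial\Omega|t+4\pi^2t^2=(|\partial\Omega|-2\pi t)^2\ge L(t)^2 .
\]
This is exactly the paper's \eqref{Eq:parallel2}--\eqref{Eq:parallel3}, and it gives $|r'(t)|=L(t)/(2\pi r(t))\le 1$. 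With this correction the rest of your argument goes through.

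\textbf{Minor slip.} In the gradient term, $r'(t)^2L(t)=\big(\tfrac{L}{2\pi r}\big)^2L$, not $\tfrac{L}{2\pi r}\,L$. This is harmless: since $L/(2\pi r)\le1$, you still get $\int_\Omega|\nabla u|^2\le 2\pi\int_{r_1}^{r_2}|v'|^2\,r\,dr$.
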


With these two ingredients in hand (\Cref{Prop:PW_estimate} and \Cref{Lemma:Bessels}) we are ready to prove the claimed PW-type inequality for the first eigenvalue of the Robin Laplacian.

\begin{proof}[Proof of \Cref{Thm:PW_Robin}]
Let us address the proof of \eqref{Eq:PW_Robin_a}. We will first show that, without loss of generality, we can assume that $|\Omega|=\pi$. 
To see this, assume that we know that 
\begin{equation}\label{Eq:PW_RobinVolumePi}
    \pi \mu_{\widetilde\Omega}^\mathrm{Rob}(\tilde{a}) - \pi \mu_\disk^\mathrm{Rob}(\tilde{a}) \leq F \left(\sqrt{\mu_\disk^\mathrm{Rob}(\tilde{a})} \right) \delta_{\widetilde \Omega}( \delta_{\widetilde \Omega} + 2) +  G \left(\sqrt{\mu_\disk^\mathrm{Rob}(\tilde{a})} \right) \delta_{\widetilde \Omega}
\end{equation}
for every $\tilde{a}>0$ and every $\widetilde\Omega\subset \R^2$ bounded simply connected domain with $C^2$~boundary such that $|\widetilde \Omega|=\pi$.
Then, by the scale invariance of the isoperimetric deficit and the scaling property
   \begin{equation} \label{Eq:Scale_Robin}
        \mu_{t\Omega}^\mathrm{Rob}(a) = \dfrac{1}{t^2} \mu_\Omega^\mathrm{Rob}(ta) \quad \text{for all } t>0,
    \end{equation} 
taking  $\tilde{a} = \frac{a}{\sqrt{\pi}}$ and $\widetilde \Omega := \sqrt{\frac{\pi}{|\Omega|}} \Omega$ in \eqref{Eq:PW_RobinVolumePi} ---note that $|\widetilde \Omega|=\pi$ and $\delta_{\widetilde \Omega}=\delta_\Omega$---  we deduce that
\begin{equation}
\begin{split}
   |\Omega| \, \mu_\Omega^\mathrm{Rob} \bigg(\dfrac{a}{\sqrt{|\Omega|}}  \bigg)
   - \pi \mu_\disk^\mathrm{Rob}\bigg(\dfrac{a}{\sqrt{\pi}}  \bigg) 
    &= \pi \mu_{\widetilde\Omega}^\mathrm{Rob}\big(\tfrac{a}{\sqrt{\pi}}\big) - \pi \mu_\disk^\mathrm{Rob}\big(\tfrac{a}{\sqrt{\pi}}\big)\\
    &\leq  F\left(\sqrt{\mu_\disk^\mathrm{Rob}\big(\tfrac{a}{\sqrt{\pi}}\big)} \right) \delta_\Omega(\delta_\Omega+2) +  G \left(\sqrt{\mu_\disk^\mathrm{Rob}\big(\tfrac{a}{\sqrt{\pi}}\big)} \right) \delta_\Omega,
\end{split}
\end{equation}
which is \eqref{Eq:PW_Robin_a}. 

We henceforth assume that $|\Omega|=\pi$.
Given $a>0$, let us take in \Cref{Prop:PW_estimate} the competitor 
\begin{equation}
	v(r):= J_0\left( \sqrt{\mu_\disk^\mathrm{Rob}(a) (r^2-r_1^2)} \right)
	\quad\text{for }r\in[r_1,r_2],
\end{equation}
where we recall that
\begin{equation}
	r_1 = \dfrac{1}{2\pi}\sqrt{|\partial\Omega|^2-4\pi |\Omega|} \quad \text{and} \quad r_2= \dfrac{1}{2\pi}|\partial\Omega|.
\end{equation}
To shorten the notation, in what follows we will write $\mu := \mu_\disk^\mathrm{Rob}(a)$.
Using the substitution $t=\sqrt{\mu \, (r^2-r_1^2)}$ and the fact that $r_2^2-r_1^2=1$ (because $|\Omega|=\pi$), by standard properties of the Bessel functions\footnote{Use \cite[Equation (11.3.34)]{Abramowitz1964} to integrate $t J_0(t)^2$, \cite[Equation~(9.1.27)]{Abramowitz1964} to find that $\tfrac{d}{dt} J_0(t) = - J_1(t)$ and $\tfrac{d}{dt}[ \frac{1}{2}t^2 (J_0(t)^2 + J_1(t)^2)) - t J_0(t) J_1(t) ]= t J_1(t)^2 $, and \cite[Equation~(11.3.30) with $\mu=\nu=0$]{Abramowitz1964} to integrate~$t^{-1} J_1(t)^2$.}  we see that
\begin{equation}
    \begin{split}
        &\int_{r_1}^{r_2} |v(r)|^2 \, rdr  = \dfrac{1}{2} \Big( J_0^2(\sqrt{\mu})+J_1^2(\sqrt{\mu}) \Big), \quad
        ar_2|v(r_2)|^2  = a \dfrac{|\partial\Omega|}{2\pi} J_0^2(\sqrt{\mu}), \quad \text{and}
    \end{split}
\end{equation}
\begin{equation}
    \int_{r_1}^{r_2} |\textstyle{\frac{d}{dr} v(r)}|^2 \, rdr = \dfrac{\mu}{2} \Big( J_0^2(\sqrt{\mu})+J_1^2(\sqrt{\mu}) \Big) - \sqrt{\mu} J_0(\sqrt{\mu})J_1(\sqrt{\mu}) + \dfrac{\mu}{2}  \Big(1-\big( J_0^2(\sqrt{\mu})+J_1^2(\sqrt{\mu}) \big) \Big)r_1^2.
\end{equation}
Thus, by \Cref{Prop:PW_estimate},
\begin{equation} \label{Eq:PW_Robin_r1a}
    \mu_\Omega^\mathrm{Rob}(a) \leq \mu + \mu \left(\dfrac{1}{J_0^2(\sqrt{\mu})+J_1^2(\sqrt{\mu})} -1 \right) r_1^2 + 2 \dfrac{a\tfrac{|\partial\Omega|}{2\pi}J_0^2(\sqrt\mu)-\sqrt{\mu}J_0(\sqrt\mu)J_1(\sqrt\mu)}{J_0^2(\sqrt{\mu})+J_1^2(\sqrt{\mu})}.
\end{equation}
Since $\mu = \mu_\disk^\mathrm{Rob}(a)$ is the first eigenvalue of the Robin Laplacian in $\disk$ with boundary parameter~$a>0$, it holds that 
\begin{equation}
    aJ_0(\sqrt\mu) = \sqrt\mu J_1(\sqrt\mu);
\end{equation}
see, for example, \cite[Section 7.2.2]{Henrot2006}. Therefore, we can rewrite the above inequality as
\begin{equation}
    \mu_\Omega^\mathrm{Rob}(a) \leq \mu + \mu \left(\dfrac{1}{J_0^2(\sqrt{\mu})+J_1^2(\sqrt{\mu})} -1 \right) r_1^2 + \sqrt\mu \dfrac{2J_0(\sqrt\mu)J_1(\sqrt\mu)}{J_0^2(\sqrt{\mu})+J_1^2(\sqrt{\mu})} \left( \dfrac{|\partial\Omega|}{2\pi}-1 \right).
\end{equation}
This leads to \eqref{Eq:PW_RobinVolumePi} using the definition of the functions $F$ and $G$ in \eqref{Eq:def_FG} and noticing that $r_1^2=\delta_\Omega(\delta_\Omega+2)$.
As mentioned before, from \eqref{Eq:PW_RobinVolumePi} and by scaling we obtain \eqref{Eq:PW_Robin_a} for any~$\Omega$.

Let us now address the proof of \eqref{Eq:PW_Robin_uniform}. Since $\mu = \mu_\disk^\mathrm{Rob}(a) < \Lambda_\disk = j_{0,1}^2$ for every $a>0$ ---see~\cite[pg.~981]{Freitas2021}---, by bounding the right-hand side of  \eqref{Eq:PW_Robin_a} from above using the estimates of $F$ and $G$ from \Cref{Lemma:Bessels}, we derive \eqref{Eq:PW_Robin_uniform}. Finally, we address the case of equality. On the one hand, we easily see that both sides of \eqref{Eq:PW_Robin_uniform} vanish if $\Omega$ is a disk thanks to the scaling \eqref{Eq:Scale_Robin} and the fact that $\delta_\Omega=0$. On the other hand, if $\Omega$ is not a disk, then $\delta_\Omega>0$ by the isoperimetric inequality. Therefore, \Cref{Lemma:Bessels}~$(ii)$ yields
$G \big(\sqrt{\mu_\disk^\mathrm{Rob}(a)} \big) \delta_\Omega< \pi j_{0,1}\delta_\Omega$ which, combined with \eqref{Eq:PW_Robin_a}, shows that the inequality in \eqref{Eq:PW_Robin_uniform} must be strict.
\end{proof}

We conclude this section proving \Cref{Lemma:Bessels} and  \Cref{Prop:PW_estimate}, the two ingredients needed in the previous proof.
First, we establish the properties of the functions $F$ and $G$ claimed in~\Cref{Lemma:Bessels}.

\begin{proof}[Proof of \Cref{Lemma:Bessels}]
    Both functions are well-defined and continuous in $[0,j_{0,1}]$, because both $J_0(x)$ and $J_1(x)$ are strictly positive for all $x\in(0,j_{0,1}) \subset (0,j_{1,1})$, where $j_{1,1}$ denotes the first positive zero of~$J_1$ ---recall that the zeros of Bessel functions are interlaced~\cite[Equation (9.5.2)]{Abramowitz1964}.
    
    We first address the proof of $(i)$. Using standard properties of the Bessel functions ---see \cite[Equation (9.1.27)]{Abramowitz1964}---, one can straightforwardly verify that 
    \begin{equation}
        \dfrac{d}{dx} \left( \dfrac{1}{J_0^2(x)+J_1^2(x)}-1 \right) =  \dfrac{2J_1^2(x)}{x(J_0^2(x)+J_1^2(x))^2} > 0 \quad \text{for all } x\in(0,j_{0,1}).
    \end{equation}
    Therefore, $F$ is a product of two strictly increasing functions; hence, it is strictly increasing. This yields 
        $0=F(0)<F(x)<F(j_{0,1})=C_\mathrm{PW}$ for all $x\in(0,j_{0,1})$.
    
    We conclude with the proof of $(ii)$. That $G(0)=G(j_{0,1})=0<G(x)$ for all $x\in(0,j_{0,1})$ is clear. The upper bound simply follows from the arithmetic-geometric inequality, which gives~$G(x)\leq \pi x< \pi j_{0,1}$ for all $x\in(0,j_{0,1})$.
\end{proof}

It only remains to prove \Cref{Prop:PW_estimate}. The proof of this result is based on the method of interior parallels, introduced by Makai in 1959 and refined by P\'olya in 1960 and by Payne and Weinberger in \cite{Payne1961}. It consists in using, in the Rayleigh quotient, trial functions whose level lines are parallel to $\partial\Omega$.

\begin{proof}[Proof of \Cref{Prop:PW_estimate}]
We begin the proof introducing some geometric preliminaries.
In what follows, we denote
\begin{equation}
    \mathfrak{d}(x):=\operatorname{dist}(x,\partial\Omega)\quad\text{ for all } x\in\overline\Omega,
\end{equation}
which is well known to be a Lipschitz function which satisfies $|\nabla \mathfrak{d}|=1$ almost everywhere in~$\Omega$. Given $t\geq0$, set
\begin{equation}
\begin{split} 
A(t):=|\{x\in\Omega:\,\mathfrak{d}(x)<t\}|\quad\text{and}\quad
L(t):=|\{x\in\overline\Omega:\,\mathfrak{d}(x)=t\}|;
\end{split}
\end{equation} 
$A(t)$ denotes the area of the ``interior tubular $t$-neighborhood'' of $\partial\Omega$ and $L(t)$ denotes the length of the ``interior parallel at distance $t$'' from $\partial\Omega$.\footnote{We use $|\cdot |$ in the definition of $A(t)$  to denote two-dimensional Lebesgue/Hausdorff measure, while in the definition of $L(t)$ it denotes one-dimensional Hausdorff measure in $\R^2$.} 
Note that by the coarea formula ---since $|\nabla \mathfrak{d}|=1$ almost everywhere in $\Omega$--- we have  
\begin{equation}\label{Eq:Coarea}
	\begin{split} 
		A(t)=\int_{\{x\in\Omega:\,0<\mathfrak{d}(x)<t\}}|\nabla \mathfrak{d}| \, d x
		=\int_0^t |\{x\in\overline\Omega:\,\mathfrak{d}(x)=s\}|\,ds.
	\end{split}
\end{equation}
As a consequence, $L(t)$ is well defined for almost all $t>0$ and it satisfies 
\begin{equation}
	\frac{d}{dt}A(t)=L(t) \quad \text{ for almost all } t>0.
\end{equation}

Next, we define
\begin{equation}
	\label{Eq:DefinitionR(t)}
	r(t):=\frac{1}{2\pi}\sqrt{|\partial\Omega|^2-4\pi A(t)}
	\quad\text{for $t\geq0$.}
\end{equation} 
The function $t\mapsto r(t)$ is well defined and nonincreasing thanks to the isoperimetric inequality (since $A(t)\leq|\Omega|$ for all $t\geq0$) and the monotonicity of $t\mapsto A(t)$, respectively. Moreover,
\begin{equation}\label{Eq:parallel_endpoints}
	\begin{split}
		&r(0)=\frac{1}{2\pi}|\partial\Omega|=:r_2
		\quad\text{and}\\
		&r(t)=\frac{1}{2\pi}\sqrt{|\partial\Omega|^2-4\pi |\Omega|}=:r_1\quad\text{for all $t\geq R:={\textstyle \sup_{x\in\Omega}}\,\mathfrak{d}(x)$,}
	\end{split}
\end{equation} 
thus $0\leq r_1\leq r(t)\leq r_2$ for all $t\geq 0$.
Note that $r(t)$ is defined in such a way that the annulus with outer perimeter equal to the perimeter of $\Omega$ and inner radius $r(t)$ has the same area as $\{x\in\Omega:\,\mathfrak{d}(x)<t\}$.
That is, 
\begin{equation}
	\left|\left\{x\in\R^2:\,r(t)\leq|x|\leq r_2={\textstyle \frac{1}{2\pi}|\partial\Omega|}\right\}\right|
	=\pi\left(\frac{|\partial\Omega|^2}{4\pi^2}-r(t)^2\right)
	=A(t).
\end{equation}

At this point, we want to show the key inequality for this proof:
we will prove that 
\begin{equation}\label{Eq:EstimateR(t)}
	\left|{\textstyle \frac{d}{dt}}r(t)\right| \leq1
\end{equation}
for almost all $t>0$ ---note that, actually, $\frac{d}{dt}r(t)=0$ for all $t>R$ by the second equation in~\eqref{Eq:parallel_endpoints}, and thus we only need to prove it for $t\in (0,R)$.
To establish \eqref{Eq:EstimateR(t)}, we will use the following inequality proved by Sz.-Nagy in \cite{Nagy1959} for simply connected domains:\footnote{
	The following is the rough idea behind \eqref{Eq:NagyIneq}.
	Let $\Gamma$ be a smooth convex closed curve of length $L$, parametrized by arc length $s$, and with curvature $\kappa(s)$. When one moves to the interior parallel curve at distance $t\geq0$, since locally each small arc gets stretched by a factor $1-t\kappa(s)$, the new length is $L(t)=\int(1-t\kappa(s))\,ds=L-2\pi t$, by Gauss-Bonnet theorem in the plane. That is, 
	$L(t)+2\pi t=L(0)$ for convex closed curves.
	Now, if the curve $\Gamma$ is not convex, then inward parallel curves may eventually develop self-intersections (or parts of the curve may disappear) and, thus, the length $L(t)$ may drastically decrease. This leads to $L(t)+2\pi t\leq L(0)$.
	Note also that this shows why we need to assume that $\Omega$ is simply connected: for general domains the integral of the curvature along the boundary is equal to $2\pi$ times the Euler characteristic (which is only $1$ for simply connected domains) ---actually, for an annulus one can readily verify that the contributions from the exterior and interior boundary compensate and thus $L(t)$ is constant (in the appropriate range of $t>0$), hence~\eqref{Eq:NagyIneq} cannot hold in general for non simply connected domains.
}
\begin{equation}\label{Eq:NagyIneq}
	L(t)+2\pi t\leq L(0)=|\partial\Omega|\quad\text{for almost all $t\in (0,R)$.}
\end{equation}
From this and the fact that $\frac{d}{dt}A(t)=L(t)$ for almost all $t>0$, we get
\begin{equation}\label{Eq:parallel2}
	A(t)=\int_0^t L(s)\,ds\leq|\partial\Omega|t-\pi t^2
	\quad\text{for almost all $t\in (0,R)$.}
\end{equation}
Then, using \eqref{Eq:NagyIneq} again and \eqref{Eq:parallel2}, we deduce that for almost all $t\in (0,R)$ it holds
\begin{equation}\label{Eq:parallel3}
	\left({\textstyle \frac{d}{dt}}A(t)\right)^2=L(t)^2\leq(|\partial\Omega|-2\pi t)^2
	=|\partial\Omega|^2-4\pi(|\partial\Omega|t-\pi t^2)
	\leq|\partial\Omega|^2-4\pi A(t).
\end{equation}
Hence, by \eqref{Eq:DefinitionR(t)} we have
\begin{equation}
	\left|{\textstyle \frac{d}{dt}}A(t)\right| \leq 2\pi r(t) 
	\quad\text{for almost all $t\in (0,R)$,}
\end{equation}
and since
\begin{equation}
	\label{Eq:DerivativeR(t)}
	{\textstyle \frac{d}{dt}}r(t)
	=-\frac{\frac{d}{dt}A(t)}{\sqrt{|\partial\Omega|^2-4\pi A(t)}}
	=-\frac{\frac{d}{dt}A(t)}{2\pi r(t)},
\end{equation}
the key inequality \eqref{Eq:EstimateR(t)} follows.

With all these ingredients at hand, we are ready to prove the proposition. Recall that
\begin{equation}\label{Eq:parallel5}
	\mu_\Omega^\mathrm{Rob}(a) = \inf_{u\in H^1(\Omega;\R)\setminus\{0\}}\dfrac{\int_\Omega |\nabla u|^2 + a\int_{\partial\Omega} |u|^2}{\int_\Omega |u|^2}.
\end{equation}
Take in \eqref{Eq:parallel5} any $u\in H^1(\Omega;\R)\setminus\{0\}$ of the form 
\begin{equation}
	u=v(r(\mathfrak{d})) \quad \text{ with } v\in C^1([r_1,r_2]))\setminus\{0\};
\end{equation}
recall that $r_1\leq r(t)\leq r_2$ for all $t\geq 0$. Then, since $|\nabla \mathfrak{d}|=1$ almost everywhere in $\Omega$, using~\eqref{Eq:EstimateR(t)} and the first equation in \eqref{Eq:parallel_endpoints} we see that
\begin{equation}\label{Eq:parallel6}
	\begin{split}
		&|\nabla u|^2=|v'(r(\mathfrak{d}))r'(\mathfrak{d})\nabla \mathfrak{d}|^2
		\leq|v'(r(\mathfrak{d}))|^2|\nabla \mathfrak{d}|,\\
		&u(x)=v(r(\mathfrak{d}(x))=v(r(0))=v(r_2)
		\quad\text{for all }x\in\partial\Omega,\\
		&|u|^2=|v(r(\mathfrak{d}))|^2|\nabla \mathfrak{d}|.
	\end{split}
\end{equation}
Therefore, using \eqref{Eq:parallel6} in \eqref{Eq:parallel5}, the coarea formula, that
\begin{equation}
	2 \pi r(t){\textstyle \frac{d}{dt}}r(t)= -L(t)
\end{equation}
---which follows from \eqref{Eq:DerivativeR(t)}---, and the change of variables $r=r(t)$ ---recall also \eqref{Eq:parallel_endpoints}---, we finally get
\begin{equation} 
	\begin{split}
		\mu_\Omega^\mathrm{Rob}(a) 
		&\leq \dfrac{\int_\Omega |v'(r(\mathfrak{d}))|^2|\nabla \mathfrak{d}| \, d x
			+ a|v(r_2)|^2|\partial\Omega|}{\int_\Omega |v(r(\mathfrak{d}))|^2|\nabla \mathfrak{d}| \, d x}
		=\frac{\int_0^{R}|v'(r(t))|^2 L(t)\,dt
			+ a|v(r_2)|^2|\partial\Omega|}
		{\int_0^{R}|v(r(t))|^2 L(t)\,dt}\\
		&=\frac{-2\pi\int_0^{R}|v'(r(t))|^2 r(t)\frac{d}{dt}r(t)\,dt
			+ a|v(r_2)|^2|\partial\Omega|}
		{-2\pi\int_0^{R}|v(r(t))|^2 r(t)\frac{d}{dt}r(t)\,dt}
		=\frac{\int^{r_2}_{r_1}|v'(r)|^2 \,rdr
			+  a r_2|v(r_2)|^2}
		{\int^{r_2}_{r_1}|v(r)|^2 \,rdr}
	\end{split}
\end{equation}
for all $v\in C^1([r_1,r_2]))\setminus\{0\}$, as desired.
\end{proof}

\section{A PW-type inequality for the $\overline\partial$-Robin Laplacian} \label{Sec:PW_dbarRobin}

In this section we consider the eigenvalue problem for the $\overline\partial$-Robin Laplacian
\begin{equation} 
	\begin{cases}
		-\Delta u = \mu u & \text{in } \Omega, \\
		2\bar \nu \partial_{\bar z}u + au=0 &\text{on } \partial \Omega,
	\end{cases}
\end{equation}
with boundary parameter $a>0$.
Here, $\partial_{\bar z} u = \frac{1}{2}(\partial_1 u+i \partial_2 u)$ and $\overline \nu = \nu_1-i\nu_2$, where $(\nu_1,\nu_2)$ is the outward unit normal vector on $\partial\Omega$.
As mentioned in the introduction, the second step towards the proof of our main result (\Cref{Thm:DPW_QD}) is to obtain a PW-type inequality for the first (smallest) eigenvalue of the $\overline\partial$-Robin Laplacian $\mu$, denoted from now on by $\mu_\Omega(a)$. 

Let us briefly recall (see \cite{Duran2026}) that the operator associated to the above eigenvalue problem, denoted by $\RR_a$, is the self-adjoint operator in $L^2(\Omega;\C)$ defined by
\begin{equation} \label{Eq:RodzinLaplacian}
	\begin{split}
		\Dom(\RR_a) & := \big\{u\in H^1(\Omega;\C): \, \partial_{\bar z} u \in H^1(\Omega;\C), \, 2\bar \nu \partial_{\bar z}u + au = 0 \text{ in } H^{1/2}(\partial \Omega;\C) \big\}, \\
		\RR_a u & := -\Delta u \quad \text{for all } u \in \Dom(\RR_a).
	\end{split}
\end{equation}
In \cite{Duran2026} it is proven that, for $a>0$, the spectrum of 
$\RR_a$ is purely discrete and included in~$(0,+\infty)$. Moreover, the first (smallest) eigenvalue of $\RR_a$ is given by
\begin{equation} \label{Eq:RQ_Rodzin_mu}
	\mu_\Omega(a) = \inf_{u\in E(\Omega)\setminus\{0\}}\dfrac{4\int_\Omega |\partial_{\bar z} u|^2 + a\int_{\partial\Omega} |u|^2}{\int_\Omega |u|^2},
\end{equation}
where $E(\Omega):=\{u\in L^2(\Omega;\C):\, \partial_{\bar z} u\in L^2(\Omega;\C)\text{ and }u\in L^2(\partial\Omega;\C)\}$; see \cite[Theorem 1.2]{Duran2026}.

We next state and prove a PW-type inequality for $\RR_a$.
As will be seen, it is a direct consequence of the analogous result for the (classical) Robin Laplacian, namely \Cref{Thm:PW_Robin}.

\begin{corollary} \label{Cor:PW_dabrRobin}
    Let $\Omega\subset \R^2$ be a bounded simply connected domain with $C^2$~boundary and let~$\delta_\Omega$ denote its isoperimetric deficit, as in \eqref{Eq:IsopDeficit}.
    Then, for all $a>0$ there holds 
    \begin{equation} \label{Eq:PW_dbarRobin_a}
       |\Omega| \, \mu_\Omega \bigg(\frac{a}{\sqrt{|\Omega|}}  \bigg)
       - \pi \, \mu_\disk \bigg(\frac{a}{\sqrt{\pi}}  \bigg) 
       \leq 
       F \left(\sqrt{\mu_\disk \big(\tfrac{a}{\sqrt{\pi}}\big)} \right) \delta_\Omega( \delta_\Omega+2) +  G \left(\sqrt{\mu_\disk \big(\tfrac{a}{\sqrt{\pi}}\big)} \right) \delta_\Omega,
    \end{equation}
    where $F$ and $G$ are as in \eqref{Eq:def_FG}. In particular,
    \begin{equation} \label{Eq:PW_dbarRobin_uniform}
        |\Omega| \, \mu_\Omega \bigg(\frac{a}{\sqrt{|\Omega|}}  \bigg)
        - \pi \, \mu_\disk \bigg(\frac{a}{\sqrt{\pi}}  \bigg) 
        \leq C_{\mathrm{PW}} \delta_\Omega( \delta_\Omega+2) + \pi j_{0,1} \delta_\Omega
    \end{equation}
    and the equality holds if and only if $\Omega$ is a disk. Here, $C_{\mathrm{PW}}>0$ is the constant in \eqref{Eq:PW_Dirichlet}.
\end{corollary}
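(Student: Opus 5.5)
The plan is to deduce the corollary directly from \Cref{Thm:PW_Robin}, using two facts recalled in the introduction (see \cite[Remark~1.4]{Duran2026}). First, $\mu_\Omega(a)\leq \mu_\Omega^\mathrm{Rob}(a)$ for every $a>0$ and every bounded $C^2$ domain $\Omega$. Second, equality holds when $\Omega=\disk$, so $\mu_\disk(a)=\mu_\disk^\mathrm{Rob}(a)$.

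The first fact is transparent from the variational characterizations. Take any real-valued $u\in H^1(\Omega;\R)$; then $u\in E(\Omega)$. Moreover,
\[
4|\partial_{\bar z}u|^2=|\partial_1 u+i\partial_2 u|^2=|\nabla u|^2 .
\]
Hence the Rayleigh quotient in \eqref{Eq:RQ_Rodzin_mu} coincides with the one in \eqref{Eq:RQ_Robin_mu} on real $H^1$ functions. Taking the infimum over the larger class $E(\Omega)$ gives $\mu_\Omega(a)\leq\mu_\Omega^\mathrm{Rob}(a)$. The equality on the disk I take from \cite{Duran2026} without reproving it.

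With these facts in hand, I apply the first one with parameter $a/\sqrt{|\Omega|}$ on $\Omega$ and the second with parameter $a/\sqrt\pi$ on $\disk$. This gives
\[
|\Omega|\,\mu_\Omega\big(\tfrac{a}{\sqrt{|\Omega|}}\big)-\pi\,\mu_\disk\big(\tfrac{a}{\sqrt\pi}\big)\leq |\Omega|\,\mu_\Omega^\mathrm{Rob}\big(\tfrac{a}{\sqrt{|\Omega|}}\big)-\pi\,\mu^\mathrm{Rob}_\disk\big(\tfrac{a}{\sqrt\pi}\big).
\]
I then bound the right-hand side by \eqref{Eq:PW_Robin_a}. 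Because $\mu_\disk^\mathrm{Rob}(a/\sqrt\pi)=\mu_\disk(a/\sqrt\pi)$, the arguments of $F$ and $G$ may be written with $\mu_\disk$, and this yields \eqref{Eq:PW_dbarRobin_a}.

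Likewise, \eqref{Eq:PW_dbarRobin_uniform} follows either from \eqref{Eq:PW_Robin_uniform} directly, or from \eqref{Eq:PW_dbarRobin_a} combined with \Cref{Lemma:Bessels}. For the latter route, recall that $\mu_\disk(a/\sqrt\pi)=\mu^\mathrm{Rob}_\disk(a/\sqrt\pi)<j_{0,1}^2$, so the argument of $F$ and $G$ lies in $(0,j_{0,1})$.

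For the equality case:
\begin{itemize}
\item If $\Omega$ is a disk, both sides vanish. Here I use $\delta_\Omega=0$ together with the scaling $\mu_{t\Omega}(a)=t^{-2}\mu_\Omega(ta)$, which is checked directly from \eqref{Eq:RQ_Rodzin_mu} as in \eqref{Eq:Scale_Robin}.
\item If $\Omega$ is not a disk, then $\delta_\Omega>0$. By \Cref{Lemma:Bessels}$(ii)$ we have $G(x)\,\delta_\Omega<\pi j_{0,1}\,\delta_\Omega$, and by $(i)$ we have $F(x)\leq C_{\mathrm{PW}}$. Hence strict inequality holds in \eqref{Eq:PW_dbarRobin_uniform}.
\end{itemize}

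The only genuinely nontrivial input is the comparison $\mu_\Omega\leq\mu_\Omega^\mathrm{Rob}$ together with the equality on the disk. The inequality is immediate as shown above. The equality on the disk relies on the cited result, which uses the radial structure of the first $\overline\partial$-Robin eigenfunction; everything else is bookkeeping.
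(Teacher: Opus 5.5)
Your proposal is correct and follows the paper's proof. The paper likewise obtains $\mu_\Omega(a)\leq\mu_\Omega^{\mathrm{Rob}}(a)$ by restricting the Rayleigh quotient \eqref{Eq:RQ_Rodzin_mu} to real $H^1$ functions, cites \cite[Corollary~A.3]{Duran2026} for equality on the disk, and then reads both inequalities and the equality case directly off \Cref{Thm:PW_Robin}; your explicit equality-case argument via scaling and \Cref{Lemma:Bessels} just spells out that last step.
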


\begin{proof}
Since $H^1(\Omega;\R) \subset E(\Omega)$ and $4|\partial_{\bar z} u|^2 = |\nabla u|^2$ for every real-valued function $u\in H^1(\Omega;\R)$, in view of the variational characterizations of $\mu_\Omega(a)$ and $\mu_\Omega^\mathrm{Rob}(a)$ given in  \eqref{Eq:RQ_Rodzin_mu} and \eqref{Eq:RQ_Robin_mu}, respectively, we trivially have 
\begin{equation} \label{Eq:RodzinLEQRobin}
    \mu_\Omega(a) \leq \mu_\Omega^\mathrm{Rob}(a) \quad \text{for all } a>0.
\end{equation}
Moreover, by the explicit computations in \cite[Appendix A]{Duran2026} ---in particular, see \cite[Corollary~A.3]{Duran2026}---, equality holds in \eqref{Eq:RodzinLEQRobin} when $\Omega=\disk$. Therefore, the corollary is an immediate consequence of \Cref{Thm:PW_Robin}.
\end{proof}

\section{Proof of \Cref{Thm:DPW_QD}} \label{Sec:Proof_MainThm}

In this section we prove \Cref{Thm:DPW_QD}, our main result.
To do it, we use \Cref{Cor:PW_dabrRobin} and a connection between $\lambda_\Omega(\theta,m)$ and $\mu_\Omega(a)$ studied in \cite{DuranMasSanzPerela2026} and refined in \cite{Duran2027}.
More precisely, we will use \cite[Theorem~2.2~$(i)$ and~$(iii)$]{Duran2027}, a result that allows to transfer upper bounds from~$\mu_\Omega(a)$ to~$\lambda_\Omega(\theta,m)$ and vice versa.
For the benefit of the reader, we recall its statement below. 
In there, $\vartheta$ is the smooth, strictly decreasing, and bijective function 
\begin{equation} \label{Eq:vartheta_function}
   \begin{split}
        {\textstyle \vartheta: (-\frac \pi 2, \frac \pi 2)} & \rightarrow  (0,+\infty)\\
             \theta  & \mapsto  {\textstyle\frac{1-\sin\theta}{\cos\theta}}.
    \end{split}
\end{equation}

\begin{lemma}[\cite{Duran2027}] \label{Prop:recipe_bounds}
    Let $\Omega\subset \R^2$ be a bounded domain with $C^2$ boundary. Given $m\geq 0$ and~$\mathcal B>0$, let~$a>0$ and~$\theta\in(-\frac \pi 2, \frac \pi 2)$ be such that
        $a = \vartheta(\theta) ( m + \mathcal B )$.
    Then, 
    \begin{equation}
    	\lambda_\Omega(\theta,m) \leq \mathcal B\quad\text{if and only if}\quad \mu_\Omega(a) \leq \mathcal B^2-m^2.
    \end{equation}
    Moreover, $\lambda_\Omega(\theta,m) = \mathcal B $ if and only if $ \mu_\Omega(a) = \mathcal B^2-m^2$.
\end{lemma}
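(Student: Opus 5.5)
The plan is to reduce the statement to an exact correspondence between eigenpairs of the two operators, and then to use a concavity argument to identify the \emph{first} eigenvalues on both sides.

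\textbf{Step 1: eigenfunction correspondence.} Let $(\begin{smallmatrix}u\\v\end{smallmatrix})\in\Dom(\D_{\theta,m})$ satisfy $\D_{\theta,m}(\begin{smallmatrix}u\\v\end{smallmatrix})=\lambda(\begin{smallmatrix}u\\v\end{smallmatrix})$ with $\lambda>m$. Writing $\partial_1\pm i\partial_2=2\partial_{\bar z},\,2\partial_z$, the system reads
\[
-2i\partial_z v=(\lambda-m)u, \qquad -2i\partial_{\bar z}u=(\lambda+m)v .
\]
Hence $v=-2i\partial_{\bar z}u/(\lambda+m)\in H^1$, and $-\Delta u=4\partial_z\partial_{\bar z}u=(\lambda^2-m^2)u$. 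Inserting $v$ into the boundary condition and multiplying by $\bar\nu=\nu_1-i\nu_2$ (note $|\nu|=1$) gives
\[
2\bar\nu\,\partial_{\bar z}u+\vartheta(\theta)(\lambda+m)\,u=0 \quad\text{on } \partial\Omega .
\]
Thus $u\in\Dom(\RR_a)$ with $a=\vartheta(\theta)(\lambda+m)$, and $u$ is an eigenfunction with eigenvalue $\lambda^2-m^2$. The converse is obtained by defining $v$ from $u$ by the same formula and reversing each step. This yields the key equivalence:
\[
\lambda>m \text{ is an eigenvalue of } \D_{\theta,m} \iff \lambda^2-m^2\in\sigma\big(\RR_{\vartheta(\theta)(\lambda+m)}\big).
\]

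\textbf{Step 2: reduction to a one-variable function.} Fix $\theta$ and $m$, and set
\[
g(B):=\mu_\Omega\big(\vartheta(\theta)(m+B)\big)-(B^2-m^2), \qquad B>0 .
\]
By \eqref{Eq:RQ_Rodzin_mu}, $a\mapsto\mu_\Omega(a)$ is an infimum of affine nondecreasing functions of $a$. It is therefore concave, nondecreasing, and continuous, and testing with $u=1$ shows it is bounded above by $a|\partial\Omega|/|\Omega|$. Consequently $g$ is concave, being a concave function minus a convex one, and $g(B)\to-\infty$ as $B\to\infty$. Moreover $g>0$ for $B\in(0,m]$ because $\mu_\Omega(a)>0$ for $a>0$. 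In the case $m=0$, positivity near $B=0$ must instead come from comparing the orders of magnitude of $\mu_\Omega(a)$ and $B^2$ as $a\downarrow0$. Granting positivity near $0$, concavity yields a unique zero $B^*>m$ with $g>0$ on $(0,B^*)$ and $g<0$ on $(B^*,\infty)$.

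\textbf{Step 3: identification $\lambda_\Omega(\theta,m)=B^*$.}
\begin{itemize}
\item Since $g(B^*)=0$, the number $(B^*)^2-m^2$ is the first eigenvalue of $\RR_a$ with $a=\vartheta(\theta)(m+B^*)$. By Step 1, $B^*$ is then an eigenvalue of $\D_{\theta,m}$, so $\lambda_\Omega(\theta,m)\le B^*$.
\item Conversely, set $\lambda=\lambda_\Omega(\theta,m)$; recall $\lambda>m$. By Step 1, $\lambda^2-m^2$ is some eigenvalue of $\RR_{\vartheta(\theta)(\lambda+m)}$, hence is at least the first one. This says $g(\lambda)\le0$, so $\lambda\ge B^*$.
\end{itemize}
Therefore $\lambda_\Omega(\theta,m)=B^*$. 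With $a=\vartheta(\theta)(m+\mathcal B)$, the sign structure of $g$ gives $\lambda_\Omega\le\mathcal B$ if and only if $g(\mathcal B)\le0$, that is, if and only if $\mu_\Omega(a)\le\mathcal B^2-m^2$. Likewise, equality $\lambda_\Omega=\mathcal B$ holds if and only if $g(\mathcal B)=0$.

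\textbf{Main obstacle.} The delicate part is Step 3, namely ensuring that the Dirac eigenvalue $\lambda_\Omega$ corresponds to the \emph{first} $\overline\partial$-Robin eigenvalue rather than a higher one. The concavity of $g$ together with the sign of $g$ near $0$ is exactly what handles this. The functional-analytic check in Step 1, that $\partial_{\bar z}u\in H^1$ and the domains match, is routine from the $H^1$-regularity built into both domains.
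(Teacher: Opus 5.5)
The paper does not prove this lemma; it imports it from \cite{Duran2027}. Your argument is a correct, self-contained proof that uses only facts the paper recalls: the variational characterization \eqref{Eq:RQ_Rodzin_mu} of $\mu_\Omega(a)$ as the bottom of $\sigma(\RR_a)$, positivity of $\sigma(\RR_a)$, and $\lambda_\Omega(\theta,m)>m$.

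\begin{itemize}
\item The eigenpair correspondence in Step~1 is computed correctly, including the boundary condition after multiplying by $\bar\nu$.
\item The ``unique crossing'' of the concave function $B\mapsto\mu_\Omega(\vartheta(\theta)(m+B))$ with the parabola $B^2-m^2$ is exactly what ties $\lambda_\Omega(\theta,m)$ to the \emph{first} $\overline\partial$-Robin eigenvalue. It yields both equivalences.
\end{itemize}

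The one step you leave as ``granted'' is positivity of $g$ near $B=0$ when $m=0$. This step is needed: a concave $g$ with $g(0^+)=0$ could be negative everywhere (e.g.\ $-B^2$), and then no $B^*$ would exist. However, it follows at once from the concavity you already proved.

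Since $\mu_\Omega\geq 0$, for $0<\varepsilon<a<1$ concavity gives
\[
\mu_\Omega(a)\geq\tfrac{1-a}{1-\varepsilon}\,\mu_\Omega(\varepsilon)+\tfrac{a-\varepsilon}{1-\varepsilon}\,\mu_\Omega(1)\geq\tfrac{a-\varepsilon}{1-\varepsilon}\,\mu_\Omega(1).
\]
Letting $\varepsilon\downarrow 0$ yields $\mu_\Omega(a)\geq a\,\mu_\Omega(1)$ for $a\in(0,1]$. Hence, for $m=0$,
\[
g(B)=\mu_\Omega(\vartheta(\theta)B)-B^2\geq B\big(\vartheta(\theta)\mu_\Omega(1)-B\big)>0
\quad\text{for } 0<B<\min\{\vartheta(\theta)^{-1},\,\vartheta(\theta)\mu_\Omega(1)\},
\]
because $\mu_\Omega(1)>0$ (positivity of $\sigma(\RR_1)$). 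This makes your ``orders of magnitude'' intuition precise: $\mu_\Omega(a)$ is at least linear in $a$ near $0$, while $B^2$ is quadratic.

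With this inserted, the sign structure of $g$ (positive on $(0,B^*)$, negative on $(B^*,\infty)$) holds for all $m\geq 0$. Steps~2--3 then go through as written.
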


The strategy to prove \Cref{Thm:DPW_QD} will be to use the previous lemma with a suitable $\mathcal B$ and take advantage of \Cref{Cor:PW_dabrRobin} ---in particular, the  inequality with right-hand side independent of~$a$--- and the concavity of the function $a\mapsto \mu_\disk(a)$.
Since we will use this and other properties of this function (some of them established in \cite{Duran2026, Duran2027}), we collect them in the following lemma, whose proof is postponed until the end of the section.

\begin{lemma}\label{Lemma:PropertiesMuDisc}
	Given $a>0$, let $\mu_\disk(a)$ denote the first eigenvalue of the $\overline{\partial}$-Robin Laplacian $\RR_{a}$~in the unit disk and, given $\theta\in(-\frac \pi 2, \frac \pi 2)$ and $m\geq 0$, let $\lambda_\disk$ denote the first positive eigenvalue of the quantum dot Dirac operator $\mathcal{D}_{\theta, m}$ in the unit disk; that is, $\lambda_\disk := \lambda_\disk(\theta,m)$.~Then, 
	\begin{enumerate}[label=$(\roman*)$]
		\item the function $a\mapsto \mu_\disk(a)$ is real analytic and concave in $(0,+\infty)$;
		
		\item it holds that
		\begin{equation} \label{Eq:QD_dbar_disk}
			\mu_\disk \big( \vartheta(\theta) (\lambda_\disk+m) \big) = \lambda_\disk^2-m^2\text{;}
		\end{equation}
		\item the derivative of $a\mapsto \mu_\disk(a)$  satisfies
		\begin{equation}\label{Eq:DerivativeMuDiskIneq}
			a\frac{d}{da}\mu_\disk(a) \leq \mu_\disk(a) \quad \text{ for all } a>0
		\end{equation}
		and
		 \begin{equation}\label{Eq:DerivativeMuDiskId}
			\frac{d}{da}\mu_{\disk}(a)\Big|_{a=\vartheta(\theta)(\lambda_\disk+m)} = \dfrac{2}{1+ \frac{\lambda_\disk+m}{\lambda_\disk-m}\vartheta(\theta)^2 }.
		\end{equation}
	\end{enumerate}
\end{lemma}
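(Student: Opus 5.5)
The plan is to reduce everything about $\mu_\disk(a)$ to the classical Robin eigenvalue on the disk, where the eigenvalue equation is explicit. By \cite[Corollary~A.3]{Duran2026} (already used in the proof of \Cref{Cor:PW_dabrRobin}) we have $\mu_\disk(a)=\mu^\mathrm{Rob}_\disk(a)$ for all $a>0$. Hence $x:=\sqrt{\mu_\disk(a)}$ is the unique solution in $(0,j_{0,1})$ of
\begin{equation}
a=g(x),\qquad g(x):=\frac{xJ_1(x)}{J_0(x)} .
\end{equation}
Here we use $\mu^\mathrm{Rob}_\disk(a)<j_{0,1}^2$ and the relation $aJ_0(\sqrt\mu)=\sqrt\mu J_1(\sqrt\mu)$, both recalled in the proof of \Cref{Thm:PW_Robin}. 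Using $J_0'=-J_1$ and $(xJ_1)'=xJ_0$, one computes
\begin{equation}
g'(x)=\frac{x\big(J_0^2(x)+J_1^2(x)\big)}{J_0^2(x)}=x+\frac{g(x)^2}{x}>0\quad\text{on }(0,j_{0,1}).
\end{equation}
So $g$ is a real analytic increasing bijection $(0,j_{0,1})\to(0,+\infty)$, and $\mu_\disk(a)=(g^{-1}(a))^2$ is real analytic by the analytic inverse function theorem.

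\emph{Concavity in (i).} For each fixed $u\in E(\disk)\setminus\{0\}$, the Rayleigh quotient in \eqref{Eq:RQ_Rodzin_mu} is affine in $a$. Therefore $\mu_\disk$, as an infimum of affine functions of $a$, is concave on $(0,+\infty)$.

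\emph{Item (ii).} Apply \Cref{Prop:recipe_bounds} with $\Omega=\disk$, $\mathcal B=\lambda_\disk(\theta,m)>m\geq0$ and $a=\vartheta(\theta)(m+\lambda_\disk)$. Since trivially $\lambda_\disk(\theta,m)=\mathcal B$, the equality part of the lemma gives $\mu_\disk(a)=\lambda_\disk^2-m^2$, which is \eqref{Eq:QD_dbar_disk}.

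\emph{Inequality \eqref{Eq:DerivativeMuDiskIneq} in (iii).} Taking $u=1$ gives $0<\mu_\disk(a)\leq a|\partial\disk|/|\disk|=2a$, so $\mu_\disk(0^+)=0$. By concavity and differentiability, for $0<s<a$ we have $\mu_\disk(s)\leq \mu_\disk(a)+\mu_\disk'(a)(s-a)$. Letting $s\downarrow0$ yields $a\mu_\disk'(a)\leq\mu_\disk(a)$.

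\emph{Formula \eqref{Eq:DerivativeMuDiskId} in (iii).} From $\mu=x^2$ and $a=g(x)$, the expression for $g'$ gives
\begin{equation}
\frac{d\mu_\disk}{da}=\frac{2x}{g'(x)}=\frac{2x^2}{x^2+a^2}=\frac{2}{1+a^2/\mu_\disk(a)} .
\end{equation}
Evaluate at $a=\vartheta(\theta)(\lambda_\disk+m)$ and use (ii), $\mu_\disk(a)=(\lambda_\disk-m)(\lambda_\disk+m)$. This gives $a^2/\mu_\disk(a)=\vartheta(\theta)^2\frac{\lambda_\disk+m}{\lambda_\disk-m}$, which is the claimed formula.

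The main obstacle is conceptual rather than computational: everything hinges on the identification $\mu_\disk=\mu_\disk^\mathrm{Rob}$ from \cite{Duran2026} and on $\sqrt{\mu_\disk(a)}<j_{0,1}$, which together make $J_0\neq0$ and the implicit equation legitimate. The rest is the Bessel identity for $g'$.
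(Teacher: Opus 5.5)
Your proof is correct. For items $(i)$ and $(iii)$ it takes a genuinely different route from the paper.

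\textbf{The paper's route.}
\begin{itemize}
\item Analyticity comes abstractly from analytic perturbation theory \cite[Theorem~1.3 and Remark~4.3]{Duran2026}, plus simplicity of $\mu_\disk(a)$.
\item Concavity is cited from \cite[Theorem~1.3~$(iv)$]{Duran2026}.
\item Both parts of $(iii)$ come from the Hellmann--Feynman type formula $\frac{d}{da}\mu_\disk(a)=\int_{\partial\disk}|u|^2/\int_\disk|u|^2$, with $u$ a minimizer. Inserting it into \eqref{Eq:RQ_Rodzin_mu} and discarding $4\int_\disk|\partial_{\bar z}u|^2\geq 0$ gives \eqref{Eq:DerivativeMuDiskIneq}.
\item Evaluating the same formula on the explicit eigenfunction $J_0(\sqrt{\lambda_\disk^2-m^2}\,r)$ and using the Dirac eigenvalue equation \eqref{Eq:EigenEq_disk} gives \eqref{Eq:DerivativeMuDiskId}.
\end{itemize}

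\textbf{Your route.}
\begin{itemize}
\item You use $\mu_\disk=\mu_\disk^{\mathrm{Rob}}$ to write $\sqrt{\mu_\disk(a)}=g^{-1}(a)$. Analyticity is then the inverse function theorem, and the derivative is implicit differentiation.
\item This yields the closed formula $\frac{d}{da}\mu_\disk(a)=2\mu_\disk(a)/(\mu_\disk(a)+a^2)$ for every $a>0$. It agrees with the paper's $2/(1+J_1^2/J_0^2)$, since $J_1/J_0=a/x$.
\item You prove concavity directly, as an infimum of functions affine in $a$.
\item You get the inequality from concavity together with $0<\mu_\disk(s)\leq 2s$.
\item You convert $a^2/\mu_\disk(a)$ using $(ii)$ rather than the Dirac eigenvalue equation.
\end{itemize}

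\textbf{Comparison.} Your route is more self-contained. It needs only the disk identification from \cite[Corollary~A.3]{Duran2026} and classical Robin facts, not the Hellmann--Feynman formula or the abstract analyticity theorem. The paper's tools are domain-independent and use the disk only for the final explicit evaluation. Your argument for \eqref{Eq:DerivativeMuDiskIneq} is also domain-independent, though: concavity and the bound $\mu_\Omega(s)\leq s|\partial\Omega|/|\Omega|$ hold for every $\Omega$, wherever $\mu_\Omega$ is differentiable.
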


With the previous ingredients at hand we proceed now with the proof of our main result.

\begin{proof}[Proof of \Cref{Thm:DPW_QD}]

As we did in the proof of~\Cref{Thm:PW_Robin} for the first eigenvalue of the Robin Laplacian, the first step will be to reduce the proof of \eqref{Ineq_:main_thm} to domains with~$|\Omega|=\pi$ in virtue of the scaling property 
\begin{equation} \label{Eq:scaling_Dirac}
    \lambda_{t\Omega} (\theta,m) = \dfrac{1}{t} \lambda_\Omega(\theta,tm) \quad \text{for all } t>0,
\end{equation}
which easily follows by inspecting  \eqref{Eq:Dirac_op_theta}.
Let $\Omega\subset\R^2$ be a bounded simply connected domain with $C^2$~boundary, and let $\delta_{\Omega}$ denote~its isoperimetric deficit; recall \eqref{Eq:IsopDeficit}. Assume that for all~$\theta\in(-\frac \pi 2, \frac \pi 2)$ and all~$\widetilde{m}\geq 0$ we know that
\begin{equation}
	\label{Eq:PW_DiracVolumePi}
	\lambda_{\widetilde\Omega} \left (\theta, \widetilde{m} \right) - \lambda_\disk(\theta,\widetilde{m}) \leq C({\theta,\widetilde{m}}) \, \delta_{\widetilde\Omega}
\end{equation}
holds for every bounded simply connected domain $\widetilde\Omega\subset \R^2$ with $C^2$~boundary such that $|\widetilde\Omega|=\pi$, for some constant $C({\theta,\widetilde{m}})$ depending only on $\theta$ and $\widetilde{m}$.
Then, by the scale invariance of the isoperimetric deficit and the scaling property \eqref{Eq:scaling_Dirac}, taking  in \eqref{Eq:PW_DiracVolumePi}
\begin{equation}
	\widetilde{m} := \frac{m}{\sqrt{\pi}} 
	\quad \text{ and } \quad \widetilde \Omega := \sqrt{\frac{\pi}{|\Omega|}} \Omega \quad\text{---which satisfies $|\widetilde \Omega|=\pi$ and 
	$\delta_{\widetilde \Omega}=\delta_\Omega$---,}
\end{equation}
we deduce that
\begin{equation}
	\label{Eq:ConclusionScalingPWDirac}
	\begin{split}
		 \sqrt{|\Omega|} \, \lambda_\Omega \left(\theta, \frac{m}{\sqrt{|\Omega|}}  \right)
		- \sqrt{\pi} \lambda_\disk \left(\theta, \frac{m}{\sqrt{\pi}}  \right) 
		&= \sqrt{\pi } \left( \lambda_{\widetilde\Omega}(\theta,\widetilde{m}) - \lambda_\disk(\theta,\widetilde{m}) \right) \\
		& \leq \sqrt{\pi } C({\theta,\widetilde{m}}) \, \delta_{\widetilde\Omega} = \sqrt{\pi } C({\theta,\tfrac{m}{\sqrt{\pi}} }) \, \delta_{\Omega},
	\end{split}       
    \end{equation}
which is \eqref{Ineq_:main_thm} with $C= \sqrt{\pi } C({\theta,{m}/{\sqrt{\pi}} })$. 
This shows that, to prove the theorem, it is enough to establish
\begin{equation} \label{eq:AUXgoal}
    \lambda_\Omega(\theta,m) - \lambda_\disk(\theta,m) \leq C(\theta,m) \,  \delta_\Omega
\end{equation}
under the assumption $|\Omega|=\pi$. To shorten the notation, in what follows we denote $\lambda_\Omega:=\lambda_\Omega(\theta,m)$,  $\lambda_\disk:=\lambda_\disk(\theta,m)$, and~$\delta:=\delta_\Omega$.

To establish \eqref{eq:AUXgoal}, let $C_\star>0$ be a constant to be chosen later, and set
\begin{equation}
	a_0:= \vartheta(\theta) (m + \lambda_\disk+ C_\star\delta).
\end{equation}
Note that since $ \lambda_\disk > m\geq 0$ by \cite[Lemma 3.1]{DuranMasSanzPerela2026}, $\theta\in (-\frac{\pi}{2},\frac{\pi}{2})$, and $C_\star \delta \geq 0$, we have that~$a_0\in(0,+\infty)$. With this choice of the parameter $a_0$ and using \Cref{Prop:recipe_bounds} for $\mathcal{B}:=\lambda_\disk+ C_\star \delta$, we see that \eqref{eq:AUXgoal} holds for $C({\theta, m}) = C_\star$ if and only if 
\begin{equation}\label{Ineq_:main_thm_3}
	\mu_\Omega(a_0) - \left( \left(\lambda_\disk+ C_\star \delta\right)^2-m^2 \right) \leq 0.
\end{equation}
Our goal now is to prove \eqref{Ineq_:main_thm_3}. 
To this end, using that $a_0= \vartheta(\theta) (\lambda_\disk+ m +  C_\star\delta)$ we can rewrite the left hand side of \eqref{Ineq_:main_thm_3} as
\begin{equation}\label{Ineq_:main_thm_4}
	\begin{split}
		\mu_\Omega&(a_0) - \left( \left(\lambda_\disk+ C_\star \delta\right)^2-m^2 \right) \\ &= \Big(\mu_\Omega(a_0) - \mu_\disk(a_0)\Big) + \mu_\disk \Big( \vartheta(\theta) (\lambda_\disk+m) + \vartheta(\theta) C_\star \delta \Big)  - \Big((\lambda_\disk + C_\star \delta)^2 -m^2\Big)\\
		&=: M_1+M_2+M_3. 
	\end{split}
\end{equation}
We estimate $M_1$ using the PW-type inequality for the $\overline{\partial}$-Robin Laplacian $\RR_{a_0}$; recall~\Cref{Cor:PW_dabrRobin}. In particular (and this is crucial), we will use the inequality with the right-hand side independent of the boundary parameter $a=a_0$.
We get
\begin{equation}
    \label{Eq:ProofMainResultPWRodzin}
	\begin{split}
		M_1=\mu_\Omega(a_0) - \mu_\disk(a_0)
		\leq \frac{C_{\mathrm{PW}}}{\pi} \, \delta( \delta+2) + j_{0,1}  \delta,
	\end{split}
\end{equation}
since $|\Omega|=\pi$. To estimate $M_2$ we use the properties of $a\mapsto \mu_\disk(a)$ stated in~\Cref{Lemma:PropertiesMuDisc}. More specifically, we first use that this function is differentiable and concave, hence $\mu_\disk(b)\leq\mu_\disk(a)+\frac{d}{da}\mu_{\disk}(a)(b-a)$ for all $b\geq a>0$, and then we use~\eqref{Eq:QD_dbar_disk} and~\eqref{Eq:DerivativeMuDiskId} to compute $\mu_\disk(a)$ and its derivative at the specific value $a=\vartheta(\theta)(\lambda_\disk+m)$. We get
\begin{equation}
	\begin{split}
		M_2&=\mu_\disk \Big( \vartheta(\theta) (\lambda_\disk+m) + \vartheta(\theta) C_\star\delta \Big)\\
		&\leq\mu_\disk \Big( \vartheta(\theta) (\lambda_\disk+m)\Big) 
		+ \frac{d}{da}\mu_{\disk}(a)\Big|_{a=\vartheta(\theta)(\lambda_\disk+m)}\vartheta(\theta) C_\star \delta
		= \lambda_\disk^2-m^2 
		+ \dfrac{2\vartheta(\theta) C_\star \delta}{1+ \frac{\lambda_\disk+m}{\lambda_\disk-m}\vartheta(\theta)^2 }.
	\end{split}
\end{equation}
Now, using these estimates for $M_1$ and $M_2$ in \eqref{Ineq_:main_thm_4}, we get    
\begin{equation}
	\begin{split}
		\mu_\Omega&(a_0)  - \left( \left(\lambda_\disk+ C_\star \delta\right)^2-m^2 \right)  
		=  M_1+M_2+M_3\\
		& \leq \frac{C_{\mathrm{PW}}}{\pi} \delta( \delta+2) + j_{0,1} \delta
		+\lambda_\disk^2-m^2 
		+ \dfrac{2\vartheta(\theta) C_\star\delta}{1+ \frac{\lambda_\disk+m}{\lambda_\disk-m}\vartheta(\theta)^2 }
		- \left(\lambda_\disk^2-m^2+2\lambda_\disk C_\star\delta + C_\star^2\delta^2\right)\\
		&=\left(\frac{2 C_{\mathrm{PW}}}{\pi}  + j_{0,1} 
		+ 2\left(\dfrac{\vartheta(\theta) }{1+ \frac{\lambda_\disk+m}{\lambda_\disk-m}\vartheta(\theta)^2 }
		-\lambda_\disk  \right)C_\star\right)\delta 
		+\left(\frac{C_{\mathrm{PW}}}{\pi} -C_\star^2 \right)\delta^2 \\
		&=: \ell \delta + q \delta ^2.
	\end{split}
\end{equation}
Since $\delta\geq0$, in order to establish \eqref{Ineq_:main_thm_3} and conclude the proof, we need to show that both coefficients $\ell$ and $q$ are nonnegative, which will hold if we take $C_\star$ big enough, as shown next. On the one hand, $q\leq 0$ provided that
\begin{equation}
	\label{Eq:ConditionCStar1}
	C_\star \geq \sqrt{\frac{C_{\mathrm{PW}}}{\pi}}.
\end{equation}
On the other hand, to see that $\ell\leq 0$ for $C_\star$ big enough we only need to show that 
\begin{equation}\label{Eq:AuxCoef>0}
	\dfrac{\vartheta(\theta)}{1+ \frac{\lambda_\disk+m}{\lambda_\disk -m} 
		\vartheta(\theta)^2 }<\lambda_\disk.
\end{equation}
To see this we use \eqref{Eq:DerivativeMuDiskId}, \eqref{Eq:DerivativeMuDiskIneq}, and \eqref{Eq:QD_dbar_disk} to obtain
\begin{equation}
	\begin{split}
		\dfrac{2\vartheta(\theta)}{1+ \frac{\lambda_\disk+m}{\lambda_\disk-m}\vartheta(\theta)^2 }&=\vartheta(\theta)\frac{d}{da}\mu_{\disk}(a)\Big|_{a=\vartheta(\theta)(\lambda_\disk+m)} 
		= \frac{1}{\lambda_\disk+m} \vartheta(\theta)(\lambda_\disk+m) \frac{d}{da}\mu_{\disk}(a)\Big|_{a=\vartheta(\theta)(\lambda_\disk+m)} \\
		& \leq \dfrac{1}{\lambda_\disk+m} \mu_{\disk}\big(\vartheta(\theta)(\lambda_\disk+m)\big) 
		= \dfrac{1}{\lambda_\disk+m} ( \lambda_\disk^2-m^2 ) 
		= \lambda_\disk - m 
		< 2\lambda_\disk,
	\end{split}
\end{equation}
where in the last inequality we  used that~$\lambda_\disk>m\geq 0$ by \cite[Lemma 3.1]{DuranMasSanzPerela2026}.
This shows \eqref{Eq:AuxCoef>0} and, as a consequence, we deduce that $\ell \leq 0$ provided that
\begin{equation}
	\label{Eq:ConditionCStar2}
	C_\star \geq \dfrac{\frac{ C_{\mathrm{PW}}}{\pi}  + \frac{j_{0,1}}{2} }{
		\lambda_\disk - \tfrac{\vartheta(\theta) }{1+ \tfrac{\lambda_\disk+m}{\lambda_\disk-m}\vartheta(\theta)^2 }}
\end{equation}
In summary, if $C_\star$ satisfies \eqref{Eq:ConditionCStar1} and \eqref{Eq:ConditionCStar2}, then \eqref{eq:AUXgoal} holds for any $C({\theta, m}) \geq C_\star$.

Let us conclude this proof by analyzing the case of equality mentioned in~\Cref{Rmk:Constant}.
Again by scaling it suffices to consider \eqref{eq:AUXgoal} in the case $|\Omega|= \pi$.
On the one hand, if $\Omega = \disk$ then clearly \eqref{eq:AUXgoal} holds with an equality. 
On the other hand, if $\Omega$ is not a disk, then the inequality in \eqref{Eq:ProofMainResultPWRodzin} is strict; see the statement of the PW-type inequality for the $\overline\partial$-Robin Laplacian in~\Cref{Cor:PW_dabrRobin}.
As a consequence, we get a strict inequality in \eqref{Ineq_:main_thm_3} for the $C_\star$ chosen above, which gives a strict inequality in \eqref{eq:AUXgoal} by~\Cref{Prop:recipe_bounds}.
\end{proof}

It only remains to prove \Cref{Lemma:PropertiesMuDisc}.
As mentioned, some of the stated properties were already proven in~\cite{Duran2026,Duran2027}.

\begin{proof}[Proof of \Cref{Lemma:PropertiesMuDisc}]
	We start with the properties already proved in (or readily following from) \cite{Duran2026,Duran2027}.
	First, the fact that the function $(0,+\infty)\ni a\mapsto	\mu_\disk(a)\in(0,+\infty)$ is everywhere analytic follows from~\cite[Theorem~1.3 and~Remark~4.3]{Duran2026} ---which gives that $a\mapsto	\mu_\disk(a)$ is  analytic except at values $a$ in which the multiplicity of the eigenvalue $\mu_\disk(a)$ changes--- combined with~\cite[Corollary~A.3]{Duran2026} ---which shows that $\mu_\disk(a)$ is simple;  
the concavity of the function is proven in \cite[Theorem 1.3 $(iv)$]{Duran2026}.
	Second, the identity \eqref{Eq:QD_dbar_disk}, that is, $\mu_\disk \big( \vartheta(\theta) (\lambda_\disk+m) \big) = \lambda_\disk^2-m^2$, is precisely the case of equality in \Cref{Prop:recipe_bounds} with $\mathcal{B} = \lambda_\disk$.
	Last, that $a\frac{d}{da}\mu_\disk(a) \leq \mu_\disk(a)$ holds for all $a>0$ follows by the variational characterization \eqref{Eq:RQ_Rodzin_mu} for $\mu_\disk(a)$ using the explicit formula~$\frac{d}{da}\mu_\disk(a) = \int_{\partial\disk} |u|^2 / \int_{\disk} |u|^2 $ given in~\cite[Theorem~1.3~$(iii)$]{Duran2026}; here $u$ is a minimizer in~\eqref{Eq:RQ_Rodzin_mu}.

	Let us now address the proof of \eqref{Eq:DerivativeMuDiskId}, that is,
	\begin{equation}\label{Eq:DerivativeMuDiskIdProof}
		\frac{d}{da}\mu_{\disk}(a)\Big|_{a=\vartheta(\theta)(\lambda_\disk+m)} = \dfrac{2}{1+ \frac{\lambda_\disk+m}{\lambda_\disk-m}\vartheta(\theta)^2 }.
	\end{equation}
	By \cite[Proposition~A.2]{Duran2026} (see also \cite[Corollary A.3]{Duran2026}), the first eigenfunction of $\RR_{\vartheta(\theta)(\lambda_\disk+m)}$ in~$\disk$ is (up to a multiplicative constant) the radial function
    \begin{equation}
        {\textstyle u(r):=J_0 \big(\sqrt{\mu_\disk(\vartheta(\theta)(\lambda_\disk+m))}r \big) = J_0 \big(\sqrt{\lambda_\disk^2-m^2}\, r \big),\quad r\in[0,1)};
    \end{equation}
    this last equality holding by \eqref{Eq:QD_dbar_disk}, already proved in the previous paragraph.
	Therefore, by the explicit formula for $\frac{d}{da}\mu_\disk(a)$ mentioned above (and proved in \cite[Theorem~1.3~$(iii)$]{Duran2026}) we have
	\begin{equation}
		\frac{d}{da}\mu_{\disk}(a)\Big|_{a=\vartheta(\theta)(\lambda_\disk+m)} = \dfrac{\int_{\partial\disk} |u|^2}{\int_{\disk} |u|^2} = \dfrac{2}{1+\frac{J_1^2\big(\sqrt{\lambda_\disk^2-m^2} \big)}{J_0^2\big(\sqrt{\lambda_\disk^2-m^2} \big)}} = \dfrac{2}{1+ \frac{\lambda_\disk+m}{\lambda_\disk-m}\vartheta(\theta)^2 },
	\end{equation}
	where we have used \cite[Equation (11.3.34)]{Abramowitz1964} to compute the integral involving $J_0$ and, in the last equality,  that the first positive eigenvalue $\lambda_\disk$ of $\D_{\theta,m}$ in $\disk$ is the smallest positive solution~$\lambda$ to the eigenvalue equation \eqref{Eq:EigenEq_disk}, which we recall to be
	\begin{equation}
		(\lambda+m)\vartheta(\theta) J_0 \big(\sqrt{\lambda^2-m^2}\big) - \sqrt{\lambda^2-m^2} J_1 \big(\sqrt{\lambda^2-m^2}\big) = 0.
	\end{equation}
This concludes the proof.
\end{proof}

\end{document}